\documentclass{article}
\usepackage{amsmath,amssymb,amsthm}
\usepackage{fullpage}

\newtheorem{theorem}{Theorem}[section]
\newtheorem{proposition}[theorem]{Proposition}
\newtheorem{lemma}[theorem]{Lemma}
\newtheorem{corollary}[theorem]{Corollary}
\theoremstyle{definition}
\newtheorem{definition}[theorem]{Definition}

\newcommand{\FF}{\mathbb{F}}
\newcommand{\ZZ}{\mathbb{Z}}

\title{Ordinary $3$-Isogeny Graphs and Improvement of Supersingularity Testing for Twisted Hessian Curves over Prime Fields}
\author{Yuji Hashimoto${}^{123}$ \qquad Koji Nuida${}^{23}$\medskip\\
${}^1$ School of Science and Engineering, Tokyo Denki University, Japan\\ (\texttt{y.hashimoto@mail.dendai.ac.jp})\\
${}^2$ Institute of Mathematics for Industry (IMI), Kyushu University, Japan\\ (\texttt{nuida@imi.kyushu-u.ac.jp})\\
${}^3$ Cyber Physical Security Research Institute (CPSEC),\\ National Institute of Advanced Industrial Science and Technology (AIST), Japan}
\date{\today}

\begin{document}

\maketitle

\begin{abstract}
For any primes $p \neq \ell$, $\ell$-isogeny graphs of ordinary elliptic curves defined over $\FF_{p^2}$ have a typical structure called $\ell$-volcanoes, and the structure is the core of Sutherland's supersingularity testing algorithm for elliptic curves.
In this paper, by exploiting the properties of $3$-isogenies between twisted Hessian curves, we show that when $p \equiv 2 \pmod{3}$ and $\ell = 3$, every ordinary twisted Hessian curve defined over $\FF_p$ lies on the surface of the $3$-volcano.
As an application, we give an improved version of Sutherland's supersingularity testing algorithm specialized to twisted Hessian curves defined over $\FF_p$ with $p \equiv 2 \pmod{3}$.
We also give a generalization of the known fact that any supersingular $j$-invariant is a cube in $\FF_{p^2}$; we show that for any twisted Hessian curve $H(a,d)$ defined over $\FF_{p^2}$, its $j$-invariant is not a cube in $\FF_{p^2}$ if and only if $H(a,d)$ is ordinary and lies on the floor of a $3$-volcano.
\end{abstract}

\section{Introduction}
\label{sec:introduction}

Let $p$ be a prime with $p \geq 5$.
Elliptic curves $E$ over $\overline{\FF_p}$ are classified into ordinary elliptic curves and supersingular elliptic curves, where $E$ is called supersingular if its rational point group has no element of order $p$.
Among the differences of properties between ordinary elliptic curves and supersingular ones, in this paper, we focus on the different structures of connected components of isogeny graphs consisting of ordinary elliptic curves and those consisting of supersingular ones.
Recall that for a power $q$ of $p$ and for a prime $\ell \neq p$, the $\ell$-isogeny graph $G_{\ell}(\FF_q)$ over $\FF_q$ consists of isomorphism classes of elliptic curves defined over $\FF_q$ as its vertices and $\ell$-isogenies between those elliptic curves as its edges.
Then it is known that connected components of $G_{\ell}(\FF_q)$ consisting of ordinary elliptic curves, which we call ordinary components, have typical structures called $\ell$-volcanoes; see Kohel's Ph.D.\ thesis \cite{Kohel:thesis} and papers by Fouquet--Morain \cite{DBLP:conf/ants/FouquetM02} and by Sutherland \cite{Sutherland:ANTSX}.
In contrast, it was shown by Pizer \cite{pizer1990ramanujan} that for $q = p^2$, the connected component of $G_{\ell}(\FF_{p^2})$ consisting of supersingular elliptic curves, which we call the supersingular component, is a Ramanujan graph.
The latter Ramanujan graph on supersingular elliptic curves is applied to cryptography such as the CGL hash function \cite{JC:ChaLauGor09}.
On the other hand, the former $\ell$-volcano graphs on ordinary elliptic curves are applied to supersingularity testing algorithms for elliptic curves proposed by Sutherland \cite{DBLP:journals/lmsjcm/Sutherland12a}.

In more details, the vertex set of each ordinary component of $G_{\ell}(\FF_q)$ is divided into some parts $V_0$, $V_1$, $\dots$, $V_h$, where $V_0$ and $V_h$ are called the surface and the floor of the $\ell$-volcano, respectively.
Moreover, $h$ is called the height of the $\ell$-volcano.
Now when $q = p^2$, the core idea of Sutherland's supersingularity testing algorithm is to compute a non-backtracking chain of $\ell$-isogenies $E_0 \to E_1 \to \cdots$ from an input curve $E_0$ in a way that if $E_0$ is ordinary, then each isogeny $E_j \to E_{j+1}$ is a descending isogeny, that is, from a curve in some $V_i$ to a curve in $V_{i+1}$.
Then after at most $h$ steps, the path arrives at the floor of the $\ell$-volcano, and a next non-backtracking $\ell$-isogeny goes to an elliptic curve outside of $G_{\ell}(\FF_{p^2})$, which tells us that $E_0$ is ordinary.
On the other hand, if such an $\ell$-isogeny is not found within $h + 1$ steps, then we can conclude that $E_0$ is supersingular.
In contrast to a supersingularity testing algorithm exploiting the order of a random point on the input curve (see \cite[Algorithm 1]{DBLP:journals/lmsjcm/Sutherland12a}), which is faster in average but has non-zero error probability, an advantage of Sutherland's algorithm is that its output is always correct.
The original algorithm of Sutherland \cite{DBLP:journals/lmsjcm/Sutherland12a} used $2$-isogenies between short Weierstrass curves, while Hashimoto and Nuida \cite{DBLP:conf/casc/HashimotoN21,DBLP:journals/ieiceta/HashimotoN23} gave an improved algorithm using $2^2$-isogenies between Legendre curves.

Now a problem in Sutherland's algorithm (and its improved version) is that it is in general computationally expensive to decide if an $\ell$-isogeny is a descending isogeny.
To circumvent this problem, Sutherland's algorithm computes a sufficient number (precisely, three) of paths $E_0 \to E_1 \to \cdots$ in parallel from the input curve $E_0$ to ensure that at least one of the starting isogenies $E_0 \to E_1$ is a descending isogeny.
In contrast, if we could select a descending isogeny from an input curve $E_0$ explicitly, then we would be able to improve the efficiency of Sutherland's algorithm by avoiding computation of multiple paths $E_0 \to E_1 \to \cdots$.
In this paper, we tackle this problem in some special case.

\subsection{Our Contribution}
\label{sec:introduction:contribution}

In this paper, we focus on ordinary components of $3$-isogeny graphs $G_3(\FF_{p^2})$ and discuss their application to improvement of Sutherland's supersingularity testing algorithm for elliptic curves.
First we show that an improved bound for the height $h$ of an ordinary component of $G_2(\FF_{p^2})$ as a $2$-volcano recently given by Hashimoto and Nuida in \cite{hashimoto2024boundsheights2isogenygraphs} can be generalized to the case of $\ell \neq 2$ (Proposition \ref{prop:bound_for_height}).
Then we study some properties of $3$-isogeny graphs on elliptic curves expressed by twisted Hessian forms (see e.g., a paper by Bernstein--Chuengsatiansup--Kohel--Lange \cite{LC:BCKL15}) $H(a,d) \colon a X^3 + Y^3 + Z^3 = d XYZ$ with $a,d \in \FF_{p^2}$, based on a work by Broon--Dang--Fouotsa--Moody \cite{perez2021isogenies} that describes $3$-isogenies from $H(a,d)$ explicitly.
For example, we prove (in Corollary \ref{cor:when_j_invariant_is_a_cube}) that the $j$-invariant of a twisted Hessian curve $H(a,d)$ with $a,d \in \FF_{p^2}$ is not a cube in $\FF_{p^2}$ if and only if $H(a,d)$ is ordinary and lies on the floor of the connected component of $G_3(\FF_{p^2})$ as a $3$-volcano.
This result, combined with a fact that any supersingular $j$-invariant can be realized by a twisted Hessian curve $H(a,d)$ with $a,d \in \FF_{p^2}$ (see Proposition \ref{prop:from_Weierstrass_to_Hesse}), gives another proof of a fact, originally proved by Morton \cite[Theorem 1.2]{morton2011cubic}, that any supersingular $j$-invariant is a cube in $\FF_{p^2}$.
Now we can give a variant of Sutherland's algorithm using $3$-isogenies based on the results of this paper, but our computer experiment showed that this variant does not outperform the aforementioned algorithm in \cite{DBLP:conf/casc/HashimotoN21,DBLP:journals/ieiceta/HashimotoN23} using $2^2$-isogenies between Legendre curves.

Then we focus on a special case that $p \equiv 2 \pmod{3}$ and the input for a supersingularity testing algorithm is a twisted Hessian curve $H(a,d)$ defined over $\FF_p$ (i.e., with $a,d \in \FF_p$).
In this case, we prove (in Theorem \ref{thm:surface_2_mod_3}) that any ordinary twisted Hessian curve $H(a,d)$ defined over $\FF_p$ lies on the surface $V_0$ of the connected component of $G_3(\FF_{p^2})$ as a $3$-volcano, and determine (in Corollary \ref{cor:surface_2_mod_3}) which $3$-isogenies from this $H(a,d)$ lie on the surface $V_0$ and which ones are descending isogenies.
As mentioned above, this result yields an improvement of Sutherland's supersingularity testing algorithm for the case of twisted Hessian curves $H(a,d)$ defined over $\FF_p$ by avoiding computation of multiple paths on the $3$-isogeny graph.
As summarized in Table \ref{tab:experiment_Fp_SS} of Section \ref{sec:SStest}, our computer experiment showed that in this special case with $p \equiv 2 \pmod{3}$ and $a,d \in \FF_p$, our proposed algorithm (Algorithm 2) reduced the computing times to $36.6\%$--$46.4\%$ of the existing algorithm of \cite{DBLP:conf/casc/HashimotoN21,DBLP:journals/ieiceta/HashimotoN23}.

\section{Preliminaries}
\label{sec:preliminaries}

In this paper, let $p$ be a prime with $p \geq 5$, and let $q$ be a power of $p$.
Let $\FF_q$ denote the finite field of order $q$, and let $\overline{\FF_p}$ denote an algebraic closure of $\FF_p$.
Let $\ell$ be a prime with $\ell \neq p$.

\subsection{Elliptic Curves}
\label{sec:preliminaries:EC}

It is known (as we have assumed that $p \geq 5$) that any elliptic curve $E$ defined over $\FF_q$, denoted shortly by $E/\FF_q$, can be expressed by a \emph{short Weierstrass form} $E \colon Y^2 Z = X^3 + a X Z^2 + b Z^3$ where $a,b \in \FF_q$ with $-16 (4 a^3 + 27 b^2) \neq 0$.
Let $O = O_E$ denote the identity element of the rational point group $E(\FF_q)$ of $E$, i.e., $O_E = (0:1:0)$.
Then the \emph{$j$-invariant} of $E$ is given by $j(E) = 1728 \cdot 4 a^3 / (4 a^3 + 27 b^2)$.
Two elliptic curves $E$ and $E'$ defined over $\overline{\FF_p}$ are isomorphic over $\overline{\FF_p}$ if and only if $j(E) = j(E')$.
We say that an elliptic curve $E/\overline{\FF_p}$ is \emph{supersingular} if $E[p] = \{O\}$, where we write $E[n] := \{ P \in E(\overline{\FF_p}) \mid [n] P = O \}$ for any integer $n \geq 1$, and \emph{ordinary} otherwise.
We also say that an element $j_0$ of $\overline{\FF_p}$ is \emph{ordinary} (resp.\ \emph{supersingular}) if $j(E) = j_0$ for some ordinary (resp.\ supersingular) elliptic curve $E/\overline{\FF_p}$.
It is known \cite[Theorem V.3.1]{Silverman:AEC} that if $j_0 \in \overline{\FF_p}$ is supersingular, then $j_0 \in \FF_{p^2}$.
Moreover, it is known \cite[Example V.4.4]{Silverman:AEC} that $0 \in \FF_{p^2}$ is ordinary if $p \equiv 1 \pmod{3}$ and is supersingular if $p \equiv 2 \pmod{3}$.

On the other hand, a \emph{twisted Hessian form} \cite{LC:BCKL15} of an elliptic curve over $\FF_q$ is given by
\[
H(a,d) \colon a X^3 + Y^3 + Z^3 = d X Y Z
\]
where $a,d \in \FF_q$ with $a (27a - d^3) \neq 0$.
Here the identity element of its rational point group is $O = (0:-1:1)$.
$H(a,d)$ is called a \emph{Hessian form} if $a = 1$.
Note that $H(a,d)$ is isomorphic to $H(1,d/c)$ where $c \in \overline{\FF_p}$ is a cubic root of $a$, via a change of coordinates $(X',Y',Z') = (cX,Y,Z)$.
Then the $j$-invariant of $H(a,d)$ is given \cite{farashahi2011numberdistinctlegendrejacobi} by
\begin{equation}
\label{eq:j-invariant_Hesse}
j(H(a,d))
= j(H(1,d/c))
= \left( (d/c) \frac{ (d/c)^3 + 216 }{ (d/c)^3 - 27 } \right)^3
= \frac{ d^3 }{ a } \frac{ ( d^3 + 216a )^3 }{ (d^3 - 27a)^3 } \enspace.
\end{equation}

When an element $j_0$ of $\FF_{p^2}$ is supersingular, a twisted Hessian curve $H(a,d)$ with $a,d \in \FF_{p^2}$ satisfying $j(H(a,d)) = j_0$ can be constructed by the following procedure (on the other hand, if this procedure failed for a given $j_0$, then we can conclude that $j_0$ is ordinary):
\begin{enumerate}
\item
Compute an elliptic curve $E_0 \colon Y^2 Z = X^3 + A_0 X Z^2 + B_0 Z^3$ in short Weierstrass form with $A_0, B_0 \in \FF_{p^2}$ satisfying $j(E_0) = j_0$; precisely, $A_0 := 0$ and $B_0 := 1$ if $j_0 = 0$; $A_0 := 1$ and $B_0 := 0$ if $j_0 = 1728$; and $A_0 := 3 j_0 / (1728 - j_0)$ and $B_0 := 2 j_0 / (1728 - j_0)$ if $j_0 \not\in \{0,1728\}$ \cite[Section 2.7]{Washington:book}.
Now by the assumption on $j_0$, $E_0$ is supersingular, therefore (as $p \geq 5$) we have $|E_0(\FF_{p^2})| = (p \pm 1)^2$ for some sign $\pm$ (\cite{ruck1987note}; see also \cite[Section 2.3]{DBLP:journals/lmsjcm/Sutherland12a}).
\item
Construct a quadratic twist $E_1 \colon Y^2 Z = X^3 + A_1 X Z^2 + B_1 Z^3$ of $E_0$, where $A_1 = \delta^2 A_0$ and $B_1 = \delta^3 B_0$ with $\delta \in (\FF_{p^2})^{\times}$ being a quadratic non-residue in $\FF_{p^2}$.
Then we have $j(E_1) = j(E_0)$ and $|E_0(\FF_{p^2})| + |E_1(\FF_{p^2})| = 2 p^2 + 2$ \cite[Exercise 4.10]{Washington:book}, therefore $|E_1(\FF_{p^2})| = (p \mp 1)^2$.
This implies that one of $|E_0(\FF_{p^2})|$ and $|E_1(\FF_{p^2})|$ is a multiple of $3$.
\item
Find a point $P = (x_0:y_0:1)$ of order $3$ in $E_h(\FF_{p^2})$ for some $h \in \{0,1\}$ by, e.g., finding a root $x_0 \in \FF_{p^2}$ of the third division polynomial $\psi_3 = 3 x^4 + 6A_h x^2 + 12B_h x - A_h{}^2$ and finding a square root $y_0 \in \FF_{p^2}$ of $x_0{}^3 + A_h x_0 + B_h$.
\item
By using an algorithm in \cite[Theorems 5.2 and 5.3]{LC:BCKL15} based on the point $P$ of order $3$ in $E_h(\FF_{p^2})$, construct a twisted Hessian curve $H(a,d)$ with $a,d \in \FF_{p^2}$ that is isomorphic to $E_h$, hence having $j$-invariant $j(H(a,d)) = j(E_h) = j_0$.
\end{enumerate}
We note also that when $p \equiv 2 \pmod{3}$ and $j_0 \in \FF_p$ is supersingular, as now $p \geq 5$, the elliptic curve $E_0$ in Step 1 satisfies that $|E_0(\FF_p)| = p + 1 \equiv 0 \pmod{3}$ \cite[Exercise V.5.10]{Silverman:AEC}.
Therefore, in this case, a point $P$ of order $3$ in Step 3 can be found in $E_0(\FF_p)$ (rather than in $E_0(\FF_{p^2})$), and consequently, the twisted Hessian curve $H(a,d)$ in Step 4 can be constructed to satisfy $a,d \in \FF_p$ (rather than $a,d \in \FF_{p^2}$).
We summarize the argument as a proposition for reference purpose.

\begin{proposition}
\label{prop:from_Weierstrass_to_Hesse}
Let $j_0 \in \FF_{p^2}$.
If $j_0$ is supersingular, then there exists a twisted Hessian curve $H(a,d)$ with $a,d \in \FF_{p^2}$ satisfying $j(H(a,d)) = j_0$, as constructed by the algorithm above.
If moreover $p \equiv 2 \pmod{3}$ and $j_0 \in \FF_p$, then the Hessian curve $H(a,d)$ can be found in a way that $a,d \in \FF_p$.
On the other hand, if the algorithm above failed for a given $j_0$, then $j_0$ is ordinary. 
\end{proposition}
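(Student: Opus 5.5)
The plan is to follow the four steps of the construction and check that each one succeeds whenever $j_0$ is supersingular. The third claim then follows by contraposition: if the procedure fails, $j_0$ cannot be supersingular, so it is ordinary.

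First I check that the curve $E_0$ of Step 1 is well defined with $j(E_0)=j_0$. For $j_0\notin\{0,1728\}$ this is the standard computation $j = 1728\cdot 4A_0^3/(4A_0^3+27B_0^2)$, and the discriminant is nonzero. Since supersingularity depends only on the $j$-invariant, $E_0$ is supersingular. I then need $|E_0(\FF_{p^2})| = (p\pm1)^2$. The trace $t$ of the $p^2$-Frobenius is divisible by $p$ and satisfies $|t|\le 2p$, so $t\in\{0,\pm p,\pm 2p\}$. To exclude $t\in\{0,\pm p\}$ I cite R\"uck \cite{ruck1987note} (see also \cite{DBLP:journals/lmsjcm/Sutherland12a}); I expect this citation to be the main delicate point, especially for $j_0\in\{0,1728\}$ with the particular model fixed in Step 1.

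Given this, I pass to the twist $E_1$. The twisting relation gives the trace $-t$, hence $|E_1(\FF_{p^2})| = (p\mp1)^2$. Since $p\ge5$ is prime, $3\nmid p$, so exactly one of $p-1$ and $p+1$ is divisible by $3$. Therefore $9$ divides one of the two group orders. By Cauchy's theorem, the corresponding $E_h(\FF_{p^2})$ contains a point $P$ of order $3$. Such a point is not at infinity and its $x$-coordinate is a root of $\psi_3$, so Step 3 succeeds with $x_0,y_0\in\FF_{p^2}$.

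For Step 4, I invoke \cite[Theorems 5.2 and 5.3]{LC:BCKL15}: an elliptic curve over a field $k$ with a $k$-rational point of order $3$ is isomorphic over $k$ to some $H(a,d)$ with $a,d\in k$. This gives $j(H(a,d)) = j(E_h) = j_0$. For the refinement with $p\equiv2\pmod 3$ and $j_0\in\FF_p$, the coefficients $A_0,B_0$ lie in $\FF_p$. Since $E_0$ is supersingular over $\FF_p$ with $p\ge5$, its trace over $\FF_p$ is $0$, so $|E_0(\FF_p)| = p+1\equiv0\pmod3$ \cite[Exercise V.5.10]{Silverman:AEC}. Cauchy's theorem then gives an $\FF_p$-rational point of order $3$, and applying the same theorem of \cite{LC:BCKL15} with $k=\FF_p$ yields $a,d\in\FF_p$.
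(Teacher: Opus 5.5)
Your proposal is correct and follows the paper's own argument step for step. It uses the same four-step construction, the counts $(p\pm1)^2$ for $E_0$ and $(p\mp1)^2$ for its quadratic twist (resting, as in the paper, on \cite{ruck1987note}), a rational point of order $3$ fed into \cite[Theorems 5.2 and 5.3]{LC:BCKL15}, the count $|E_0(\FF_p)|=p+1$ for the $\FF_p$ refinement, and contraposition for the failure clause. The point you flag is indeed sound: for $j_0\notin\{0,1728\}$ a trace in $\{0,\pm p\}$ would force $\pi_{E_0}=[p]\circ u$ with $u$ an automorphism of order $3$, $4$ or $6$, impossible since $\mathrm{Aut}(E_0)=\{\pm1\}$; for $j_0\in\{0,1728\}$ the Step~1 models are defined over $\FF_p$, so their trace over $\FF_{p^2}$ is $0^2-2p=-2p$.
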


We also note the following property of twisted Hessian curves define over $\FF_p$.

\begin{proposition}
\label{prop:ordinary_if_1_mod_3}
Assume that $p \equiv 1 \pmod{3}$.
Then any twisted Hessian curve $H(a,d)$ with $a,d \in \FF_p$ is ordinary.
\end{proposition}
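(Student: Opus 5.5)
The plan is to argue by contradiction and use the structure of supersingular curves over $\FF_p$. Suppose $p \equiv 1 \pmod{3}$, $a,d \in \FF_p$ with $a(27a-d^3) \neq 0$, and $H(a,d)$ is supersingular. Since $H(a,d)$ is defined over $\FF_p$, it is (as a curve over $\FF_p$) an elliptic curve $E/\FF_p$, and its identity $O=(0:-1:1)$ is an $\FF_p$-rational point. The key observation is that a twisted Hessian curve always carries an $\FF_p$-rational point of order $3$. Concretely, the points $(0:-\omega:1)$ with $\omega^3=1$ lie on $H(a,d)$ (set $X=0$: $Y^3+Z^3=0$). The point $(0:-1:1)$ is $O$. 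Since $p \equiv 1 \pmod 3$, a primitive cube root of unity $\omega$ lies in $\FF_p$, so $(0:-\omega:1) \in H(a,d)(\FF_p)$. These three points are the intersection of $H(a,d)$ with the line $X=0$. In the standard group law of \cite{LC:BCKL15} they form the subgroup of order $3$ (alternatively, they are inflection points of the cubic, and inflection points differ from $O$ by $3$-torsion). I will cite \cite{LC:BCKL15} or verify quickly that $(0:-\omega:1)$ has order $3$. Hence $3 \mid |E(\FF_p)|$.

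Next I use the fact that for $p \geq 5$ a supersingular $E/\FF_p$ has trace zero, so $|E(\FF_p)| = p+1$. This follows since the trace $t$ satisfies $t \equiv 0 \pmod p$ and $|t| \le 2\sqrt p < p$ (\cite[Exercise V.5.10]{Silverman:AEC}, already cited in the paper). With $p \equiv 1 \pmod 3$ we get $p+1 \equiv 2 \pmod 3$, which contradicts $3 \mid |E(\FF_p)|$. Therefore $H(a,d)$ is ordinary.

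The only step needing care is certifying that $(0:-\omega:1)$ is a genuine point of order $3$ in the group with identity $(0:-1:1)$. The cleanest route is to note that the tangent line at each point of $X=0$ on the curve meets the curve there with multiplicity $3$, i.e.\ they are flexes. Combined with $O$ being a flex, the chord-tangent description gives $3P = O$, and $P \neq O$. Alternatively, I can use the explicit negation and doubling formulas of \cite{LC:BCKL15}: with $-(X:Y:Z)=(X:Z:Y)$, one checks $2(0:-\omega:1) = (0:-\omega^2:1) = -(0:-\omega:1)$. I would present the latter computation briefly.
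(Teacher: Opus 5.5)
Your proposal is correct and follows essentially the same argument as the paper. The $\FF_p$-rational point $(0:-\omega:1)$ of order $3$ (which the paper takes directly from \cite[Theorem 5.1]{LC:BCKL15}, and which your check $2(0:-\omega:1) = (0:-\omega^2:1) = -(0:-\omega:1)$ confirms) forces $3 \mid |H(a,d)(\FF_p)|$, contradicting $|H(a,d)(\FF_p)| = p+1 \equiv 2 \pmod{3}$ for a supersingular curve over $\FF_p$ with $p \geq 5$.
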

\begin{proof}
As $p \equiv 1 \pmod{3}$, $\FF_p$ involves a primitive third root $\omega$ of unity.
Then $(0:-\omega:1)$ is a point of order $3$ in $H(a,d)$ \cite[Theorem 5.1]{LC:BCKL15}, therefore $|H(a,d)(\FF_p)| \equiv 0 \pmod{3}$.
On the other hand, if $H(a,d)$ were supersingular, then (as $H(a,d)$ is defined over $\FF_p$ and $p \geq 5$) we would have $|H(a,d)(\FF_p)| = p + 1 \not\equiv 0 \pmod{3}$ by \cite[Exercise V.5.10]{Silverman:AEC}.
Hence $H(a,d)$ is ordinary, as desired.
\end{proof}

\subsection{Isogenies}
\label{sec:preliminaries:isogeny}

Let $E$ and $E'$ be elliptic curves defined over $\FF_q$.
An \emph{isogeny} $\phi \colon E \to E'$ over $\FF_q$ is a non-constant $\FF_q$-morphism with $\phi(O_E) = O_{E'}$; then $\phi$ also defines a group homomorphism $E(\FF_q) \to E'(\FF_q)$.
We say that $E$ and $E'$ are \emph{isogenous} if there is an isogeny from $E$ to $E'$.
It is known that if an elliptic curve $E'$ is isogenous to an ordinary (resp.\ a supersingular) elliptic curve $E$, then $E'$ is also ordinary (resp.\ supersingular).
By this and the fact (mentioned in Section \ref{sec:preliminaries:EC}) that all supersingular $j$-invariants lie in $\FF_{p^2}$, it follows that if there is a chain of isogenies $E_0 \to E_1 \to \cdots \to E_n$ and $j(E_n) \not\in \FF_{p^2}$, then $E_0$ is ordinary.
This is one of the key ingredients in Sutherland's supersingularity testing algorithm for elliptic curves \cite{DBLP:journals/lmsjcm/Sutherland12a}.

We say that an isogeny $\phi \colon E \to E'$ is \emph{separable} if the field extension $\FF_q(E) / \phi^*(\FF_q(E'))$ of the function fields induced by $\phi$ is separable.
Then an isogeny $\phi \colon E \to E'$ is called an \emph{$\ell$-isogeny} if $\phi$ is separable and $|\ker \phi| = \ell$.
We say that $E$ and $E'$ are \emph{$\ell$-isogenous} if there is an $\ell$-isogeny from $E$ to $E'$.

Explicit forms of $3$-isogenies from a twisted Hessian curve $H(a,d)$ are described in \cite{perez2021isogenies} (we note again that now $p \geq 5$).

\begin{proposition}
[{\cite[Theorem 3]{perez2021isogenies}}]
\label{prop:3-isogeny_Hesse_type1}
Suppose that $c \in \FF_q$ and $c^3 = a$.
Then the map
\[
\phi \colon (X:Y:Z) \mapsto ( XYZ : c^2 X^2 Z + c X Y^2 + Y Z^2 : c^2 X^2 Y + c X Z^2 + Y^2 Z )
\]
is a $3$-isogeny from $H(a,d)$ to $H(A,D)$, where $A := d^2 c + 3dc^2 + 9a$ and $D := d + 6c$, with
\[
\ker \phi = \{ (0:-1:1), (1:-c:0), (1:0:-c) \} \enspace.
\]
We call this $\phi$ a $3$-isogeny of \emph{type 1} associated with $c$.
\end{proposition}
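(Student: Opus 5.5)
The statement is the explicit $3$-isogeny of type 1 from \cite[Theorem 3]{perez2021isogenies}. I would verify it directly, in three steps: the image lies on $H(A,D)$; the map is a morphism whose kernel is the stated set; and it has degree $3$ and is separable.

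\emph{Step 1: the image lies on $H(A,D)$.} Write
\[
U = XYZ,\quad V = c^2X^2Z + cXY^2 + YZ^2,\quad W = c^2X^2Y + cXZ^2 + Y^2Z .
\]
I need
\[
A U^3 + V^3 + W^3 - D\,UVW \equiv 0 \pmod{F}, \qquad F = cX^3 + Y^3 + Z^3 \cdot \text{(twisted form)},
\]
more precisely modulo $F = aX^3 + Y^3 + Z^3 - dXYZ$, where $A = d^2c + 3dc^2 + 9a$ and $D = d + 6c$.

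First I substitute $x = cX$, which is allowed since $c \in \FF_q$ and $c^3 = a$. Then the curve becomes the Hessian curve $x^3 + Y^3 + Z^3 = (d/c)\,xYZ$, and the formulas become homogeneous and symmetric:
\[
V = x^2Z + xY^2 + YZ^2, \qquad W = x^2Y + xZ^2 + Y^2Z .
\]
These are the two cyclic sums. So the identity to prove is a cyclically symmetric polynomial identity in $(x,Y,Z)$ modulo $x^3 + Y^3 + Z^3 - e\,xYZ$, with $e = d/c$.

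I would expand $V^3 + W^3$ in terms of the elementary symmetric functions $s_1, s_2, s_3$ together with the alternating part $V - W$, and reduce using $s_1^3 - 3s_1s_2 + 3s_3 = e\,s_3$. Since $U = XYZ = s_3/c$, the target becomes
\[
A\,s_3^3 / c^3 + \dots
\]
The coefficient matching should give exactly $A/c^3 = e^2 + 3e + 9$. Indeed $A/a = d^2/c^2 + 3d/c + 9$, and $D/c = e + 6$. This normalization step is the main computational obstacle. I would do it by computer algebra (Gröbner reduction) as a check, and present it via the symmetric-function reduction.

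\emph{Step 2: kernel.} I would check that each of the three listed points makes $V = W = 0$ (so it maps to $(0:-1:1)$ after scaling), or hits a base point that resolves to $O$. For example, at $(1:-c:0)$ all three coordinates vanish. I would then evaluate via the alternative representation obtained from the curve equation, or by the group law: the three points $(0:-1:1)$, $(1:-c:0)$, $(1:0:-c)$ form a subgroup of order $3$ (they are the intersection with the line $x + Y + Z = 0$ up to scaling). Since $\phi$ is a morphism of curves sending $O$ to $O$, it is a homomorphism, so it suffices to show that the preimage of $O$ is this set.

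\emph{Step 3: degree and separability.} The defining polynomials are cubic. After removing base points, intersecting the image with a line gives degree $3 \cdot 3 - (\text{base point contributions}) = 3$. Alternatively, the kernel has order $3$ and the map is separable, because $p \geq 5$ and $3 \nmid p$ imply the degree is not divisible by $p$. Hence $\phi$ is a $3$-isogeny.
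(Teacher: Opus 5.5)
The paper gives no proof of this statement; it is quoted from \cite[Theorem 3]{perez2021isogenies}, so your direct verification has to stand on its own. Your Step 1 is sound, and the symmetric-function reduction does work. Put $x = cX$, $e = d/c$, and let $s_1,s_2,s_3$ be the elementary symmetric functions of $(x,Y,Z)$. Then $V+W = s_1s_2 - 3s_3$ and $VW = s_2^3 - 6s_1s_2s_3 + s_1^3s_3 + 9s_3^2$. The curve relation $e s_3 = s_1^3 - 3s_1s_2 + 3s_3$ gives $3(V+W) + (e+6)s_3 = s_1^3$. Hence $(e^2+3e+9)s_3^3 + V^3 + W^3 - (e+6)s_3VW = (e^2+3e+9)s_3^3 + (V+W)^3 - s_1^3\,VW$, and this vanishes identically once you write $(e^2+3e+9)s_3^3 = s_3\bigl((es_3)^2 + 3(es_3)s_3 + 9s_3^2\bigr)$ and substitute. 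You should also check that $H(A,D)$ is nonsingular. This holds because $A = c(d^2+3dc+9c^2) \neq 0$ and $27A - D^3 = (3c-d)^3 \neq 0$, both following from $d^3 \neq 27a = 27c^3$.

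The genuine gap is in Steps 2 and 3, which are what make $\phi$ a $3$-isogeny with the stated kernel.

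First, your claim that all three coordinates vanish at $(1:-c:0)$ is false. In fact $\phi(1:-c:0) = (0:c^3:-c^3) = O$ and $\phi(1:0:-c) = (0:-c^3:c^3) = O$. Also $\phi(0:-1:1) = (0:-1:1)$ directly, not via $V=W=0$; that would give $(1:0:0)\notin H(A,D)$. More generally, $\phi$ has no base points on $H(a,d)$: the conditions $XYZ=0$ and $V=W=0$ force two of $X,Y,Z$ to vanish, and no point of $H(a,d)$ has that.

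Second, your count $3\cdot 3 - (\text{base point contributions}) = 3$ confuses the degree of a pulled-back divisor with the degree of the map. With no base points, the pullback of the divisor cut on $H(A,D)$ by a line is the divisor cut on $H(a,d)$ by a cubic form, which has degree $9$ by B\'ezout. Since a line meets $H(A,D)$ in $3$ points, $3\deg\phi = 9$, so $\deg\phi = 3$.

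Third, your alternative argument is circular. Knowing the kernel has three points only gives the separable degree, and you cannot claim the degree is prime to $p$ before computing it.

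The correct chain is: there are no base points, so $\deg\phi = 3$. Since $p \geq 5$, $\phi$ is then separable with $|\ker\phi| = 3$. So the three points you evaluated form the whole kernel. You can also check the reverse inclusion directly. On $H(a,d)$, $U = 0$ forces $s_3 = 0$ with exactly one coordinate zero, so $s_2 \neq 0$. Then $V+W = s_1 s_2 = 0$ forces $s_1 = 0$, and the line $cX+Y+Z=0$ meets $H(a,d)$ exactly in the three listed points.
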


\begin{proposition}
[{\cite[Theorem 1]{perez2021isogenies}}]
\label{prop:3-isogeny_Hesse_type2}
Suppose that $\omega \in \FF_q$, $\omega^3 = 1$, and $\omega \neq 1$.
Then the map
\[
\phi \colon (X:Y:Z) \mapsto ( XYZ : aX^3 + \omega^2 Y^3 + \omega Z^3 : aX^3 + \omega Y^3 + \omega^2 Z^3)
\]
is a $3$-isogeny from $H(a,d)$ to $H(d^3 - 27a, 3d)$ with
\[
\ker \phi = \{ (0:-1:1), (0:-\omega:1), (0:-\omega^2:1) \} \enspace.
\]
We call this $\phi$ a $3$-isogeny of \emph{type 2}.
\end{proposition}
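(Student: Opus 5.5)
The plan is to verify the claim directly from the explicit formulas. The target curve is handled by one polynomial identity, and the kernel and degree by elementary arguments. Throughout, write $U = XYZ$, $V = aX^3 + \omega^2 Y^3 + \omega Z^3$, $W = aX^3 + \omega Y^3 + \omega^2 Z^3$, and abbreviate $s = aX^3$, $t = Y^3$, $u = Z^3$. On $H(a,d)$ these satisfy $s+t+u = dU$ and $stu = aU^3$.

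\emph{Image lies on $H(d^3-27a,3d)$.} We must show $(d^3-27a)U^3 + V^3 + W^3 = 3d\,UVW$ on $H(a,d)$.
\begin{itemize}
\item Factor $V^3+W^3 = (V+W)(V+\omega W)(V+\omega^2 W)$. Using $1+\omega+\omega^2=0$, the three factors are $2s-t-u$, $\omega^2(2t-s-u)$ and $\omega(2u-s-t)$.
\item Substituting $s+t+u = dU$, these become $3s-dU$, $3t-dU$, $3u-dU$ up to the unit factors $\omega^2,\omega$, whose product is $1$. Expanding, with $\sigma_2 = st+tu+us$,
\[
V^3+W^3 = 27aU^3 - 9dU\sigma_2 + 2d^3U^3 .
\]
\item On the other side, $VW = s^2+t^2+u^2-st-tu-us = d^2U^2 - 3\sigma_2$.
\item Both sides then equal $3d^3U^3 - 9dU\sigma_2$.
\end{itemize}
Nonsingularity of the target follows from $(d^3-27a)\bigl(27(d^3-27a)-(3d)^3\bigr) = -27^2a(d^3-27a) \neq 0$.

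\emph{Morphism, identity and kernel.} First I rule out base points.
\begin{itemize}
\item Suppose $U=V=W=0$. Then $V-W = (\omega^2-\omega)(t-u)$ forces $t=u$. Since $U=0$, the curve equation gives $s=-2t$. Then $V+W = 2s-t-u = -6t$ forces $t=0$, hence $X=Y=Z=0$ (using $p\ge5$), which is impossible.
\item So $\phi$ is a morphism. It sends $O=(0:-1:1)$ to $(0:\omega-\omega^2:\omega^2-\omega) = (0:-1:1)$, so $\phi$ is an isogeny.
\item A point maps to $O$ iff $U=0$ and $V+W=0$. If $X=0$, then $s=0$ and $t=-u$, so $V+W=-t-u=0$ automatically. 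Hence all three points $(0:-\zeta:1)$ with $\zeta^3=1$ lie in the kernel.
\item If instead $Y=0$ with $X\neq0$, then $s+u=0$ and $V+W = 3s$, forcing $s=0=u$, a contradiction. The case $Z=0$ is symmetric. So $\ker\phi$ is exactly the stated set.
\end{itemize}

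\emph{Degree and separability.} This is the step that needs care. Since $\phi$ is given by cubic forms without common zeros on the cubic $H(a,d)$, pulling back a line of $\mathbb{P}^2$ gives a divisor of degree $3\cdot 3 = 9$. On the other hand, this pullback has degree $\deg\phi\cdot 3$, since the image is a plane cubic. Hence $\deg\phi = 3 = |\ker\phi|$. Equality of degree and kernel size shows that $\phi$ is separable, so $\phi$ is a $3$-isogeny.
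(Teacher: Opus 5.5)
The paper does not prove this statement; it simply quotes it from \cite[Theorem 1]{perez2021isogenies}. Your proposal is a correct, self-contained verification from the explicit formula, so it takes a genuinely different route.

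The key steps all check out:
\begin{itemize}
\item On the curve, the factorization $V^3+W^3=(V+W)(V+\omega W)(V+\omega^2W)$ becomes $(3s-dU)(3t-dU)(3u-dU)$. Together with $VW=d^2U^2-3\sigma_2$, both sides reduce to $3d^3U^3-9dU\sigma_2$.
\item The base-point argument is complete.
\item The kernel case split $X=0$, $Y=0\neq X$, $Z=0\neq X$ covers all of $U=0$.
\item The Bezout count $3\deg\phi=9$, combined with $\deg_s\phi=|\ker\phi|=3$, gives separability.
\end{itemize}

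The only thing to tidy is the order of statements. You call $\phi$ an isogeny before establishing that it is non-constant. The Bezout step also uses non-constancy, since it needs the image to be the whole target cubic. Non-constancy follows at once from your kernel computation (the fibre over $O$ is finite), so just state it there.

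The paper's citation buys brevity. Your route makes the exposition self-contained; in particular, it proves rather than quotes the exact kernel description that the paper later relies on in Proposition~\ref{prop:backtracking}. What your route does not give is any account of how the formula was found, since it only checks a map handed to you.
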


\subsection{Isogeny Graphs}
\label{sec:preliminaries:isogeny_graphs}

We define the \emph{$\ell$-isogeny graph} $G_{\ell}(\FF_{p^2})$ over $\FF_{p^2}$ to be the graph with vertex set $\FF_{p^2}$ whose edge $j_1 \to j_2$ corresponds to an $\ell$-isogeny from an elliptic curve $E_1/\FF_{p^2}$ with $j(E_1) = j_1$ to an elliptic curve $E_2/\FF_{p^2}$ with $j(E_2) = j_2$ (where $\ell$-isogenies with the same kernel are regarded as the same isogeny).
As described in \cite{Sutherland:ANTSX}, when $j_1,j_2 \not\in \{0,1728\}$, edges from $j_1$ to $j_2$ have the same multiplicity as edges from $j_2$ to $j_1$; therefore such edges can be regarded as undirected edges by making a pair of each edge $j_1 \to j_2$ with an associated edge $j_2 \to j_1$.
Then each connected component of $G_{\ell}(\FF_{p^2})$ consists either of ordinary $j$-invariants only or of supersingular $j$-invariants only; we call a connected component of the former and the latter types an \emph{ordinary} component and a \emph{supersingular} component, respectively.

To describe the structure of ordinary components of $G_{\ell}(\FF_{p^2})$, we recall the following definition of $\ell$-volcanoes (see e.g., \cite{Sutherland:ANTSX}) whose name originates from \cite{DBLP:conf/ants/FouquetM02}.

\begin{definition}
[$\ell$-volcano]
\label{defn:volcano}
An \emph{$\ell$-volcano} is a finite connected undirected graph whose vertex set is partitioned into subsets $V_0$, $V_1$, $\dots$, $V_h$ satisfying the following properties:
\begin{itemize}
\item
The subgraph on $V_0$, called the \emph{surface}, is a regular graph of degree at most $2$ (possibly with self-loops or parallel edges).
\item
For each $0 \leq i \leq h-1$, each vertex in $V_i$ has degree $\ell + 1$.
\item
For each $1 \leq i \leq h$, each vertex in $V_i$ has exactly one edge to a vertex in $V_{i-1}$; such edges are the only edges in the graph not on the surface.
\end{itemize}
We call the value $h$ the \emph{height} of the $\ell$-volcano, and we call $V_h$ the \emph{floor} of the $\ell$-volcano.
\end{definition}

Then we have the following property.

\begin{proposition}
[{Kohel \cite{Kohel:thesis}; Sutherland \cite[Theorem 7]{Sutherland:ANTSX}}]
\label{prop:volcano_graph}
Let $V$ be an ordinary component of $G_{\ell}(\FF_{p^2})$ that does not contain $0$ nor $1728$.
Then $V$ is an $\ell$-volcano with height $h = \nu_{\ell}((t^2 - 4p^2) / D_0) / 2$, where $t^2 = (\mathrm{Tr} \pi_E)^2$ for any elliptic curve $E/\FF_{p^2}$ with $j(E) \in V$, $\mathrm{Tr} \pi_E$ is the trace of the $p^2$-power Frobenius endomorphism $\pi_E$ on $E$, $D_0 = \mathrm{disc}(\mathcal{O}_0)$, $\mathcal{O}_0$ is the endomorphism ring of any elliptic curve $E/\FF_{p^2}$ with $j(E) \in V_0$, and $\nu_{\ell}$ denotes the $\ell$-adic valuation.
\end{proposition}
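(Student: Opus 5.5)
The plan is to follow the standard argument of Kohel, as presented by Sutherland: read everything off the endomorphism rings. Let $V$ be an ordinary component and let $E/\FF_{p^2}$ have $j(E)\in V$.

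\textbf{Setting up the levels.} Since $E$ is ordinary, $\mathrm{End}(E)$ is an order $\mathcal{O}$ in an imaginary quadratic field $K$. It contains $\ZZ[\pi_E]$, whose discriminant is $t^2-4p^2$. Any $\ell$-isogenous curve $E'$ has the same Frobenius trace up to sign, because a quadratic twist can occur when passing between representatives of $j$-invariants. Hence $E'$ has the same $K$ and the same $\ZZ[\pi]$ up to sign.

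I would first prove the basic lemma on how an $\ell$-isogeny $\phi\colon E\to E'$ changes the ring: either $\mathrm{End}(E)=\mathrm{End}(E')$, or one of them has index $\ell$ in the other. This follows from $\ell\,\mathrm{End}(E')\subseteq \phi\,\mathrm{End}(E)\,\hat\phi$ together with the symmetric inclusion. Accordingly, call $\phi$ horizontal, descending or ascending.

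Let $\mathcal{O}_0$ be the order that is maximal at $\ell$ among those occurring in $V$. Define $V_i$ as the set of $j$-invariants whose endomorphism ring has $\ell$-adic index $\ell^i$ in $\mathcal{O}_0$. The floor is reached exactly when $\ell\nmid[\mathrm{End}(E):\ZZ[\pi]]$, since $\ZZ[\pi]\subseteq\mathrm{End}(E)$ always holds. Comparing discriminants gives
\[
t^2-4p^2 = [\mathcal{O}_0:\ZZ[\pi]]^2 D_0 ,
\]
so the number of levels is $h=\nu_\ell((t^2-4p^2)/D_0)/2$.

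\textbf{Counting edges.} Next I count the edges at a vertex $E$ with $\mathrm{End}(E)=\mathcal{O}$ of conductor $f$.

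1. Horizontal isogenies correspond to invertible $\mathcal{O}$-ideals of norm $\ell$. There are $1+\left(\frac{D_0}{\ell}\right)\le 2$ of them when $\ell\nmid f$ (relative to $\mathcal{O}_0$), and none otherwise. This gives the surface condition: $V_0$ is regular of degree at most $2$, and no horizontal edges occur below it.
2. If $\ell\mid f$, there is exactly one ascending isogeny, namely the one corresponding to $E[\ell]\cap$ (kernel of the conductor-lowering map). Equivalently, it is the unique kernel $C$ with $\mathcal{O}+\ell^{-1}\ldots$ acting on $E/C$.
3. For rationality over $\FF_{p^2}$ away from the floor, $\ell\mid[\mathcal{O}:\ZZ[\pi]]$ gives $\pi\in\ZZ+\ell\mathcal{O}$, so $\pi$ acts as a scalar on $E[\ell]$. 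Then all $\ell+1$ cyclic subgroups are $\FF_{p^2}$-rational and the degree is $\ell+1$; every edge not accounted for above is descending.
4. On the floor, $\pi$ is not scalar mod $\ell$, so at most two kernels are rational. These are exactly the horizontal ones (only possible when $h=0$) and the ascending one.

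The hypothesis $0,1728\notin V$ guarantees $\mathrm{Aut}(E)=\{\pm1\}$. Then edges $j_1\to j_2$ and $j_2\to j_1$ are matched via dual isogenies with equal multiplicity, so the graph is genuinely undirected, and each vertex of $V_i$ with $i\ge1$ has exactly one edge up.

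\textbf{Main obstacle.} The delicate step is the horizontal/ascending count, i.e.\ identifying kernels of $\ell$-isogenies with ideals of $\mathcal{O}$ and showing that exactly one kernel raises the order when $\ell\mid f$. I would handle this through the CM action of $\mathrm{Pic}(\mathcal{O})$ via Deuring lifting. The alternative is to argue directly with the $\mathcal{O}/\ell\mathcal{O}$-module structure of $E[\ell]$: it is free of rank one, and its $\mathcal{O}$-stable lines are exactly the horizontal kernels.
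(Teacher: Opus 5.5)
The paper gives no proof of this proposition; it quotes it from Kohel and Sutherland. Your architecture is that standard argument: levels by the $\ell$-part of the conductor, the index-$\ell$ lemma, $\pi\in\ZZ+\ell\mathcal{O}$ off the floor, $\pi$ non-scalar on the floor, and duals for undirectedness. The trouble is the step you defer as the ``main obstacle'': the one concrete route you give for it is wrong in exactly the case you need.

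\textbf{The stable-line criterion.} The right tool is as follows. For a line $C\subset E[\ell]$ and $\phi\colon E\to E/C$, $C$ is $\mathcal{O}$-stable if and only if $\mathcal{O}\subseteq\mathrm{End}(E/C)$, i.e.\ if and only if $\phi$ is horizontal or ascending. One direction holds because endomorphisms preserving $C$ descend to $E/C$; conversely, $\phi\alpha=\alpha'\phi$ forces $\alpha(C)\subseteq C$.

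Your closing claim that the $\mathcal{O}$-stable lines are ``exactly the horizontal kernels'' is true only for $\ell\nmid f$. For $\ell\mid f$ one has $\mathcal{O}/\ell\mathcal{O}\cong\FF_\ell[\epsilon]/(\epsilon^2)$. So the free rank-one module $E[\ell]$ has a unique stable line $\epsilon E[\ell]$, and it is the ascending kernel. It cannot be horizontal, since there is no invertible $\mathcal{O}$-ideal of norm $\ell$ when $\ell\mid f$. This is precisely the existence-and-uniqueness statement your item 2 leaves as ``$\mathcal{O}+\ell^{-1}\ldots$''. Freeness itself needs an argument, e.g.\ Deuring lifting gives $E[\ell]\cong\mathfrak{a}/\ell\mathfrak{a}$ with $\mathfrak{a}$ an invertible $\mathcal{O}$-ideal. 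For $\ell\nmid f$, counting stable lines in $\FF_\ell\times\FF_\ell$, $\FF_{\ell^2}$ or $\FF_\ell[\epsilon]$ gives the $1+(\frac{D}{\ell})$ horizontal kernels.

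\textbf{Rationality.} The same observation is needed a second time. Since $\pi\in\mathcal{O}$, every $\mathcal{O}$-stable kernel is $\pi$-stable, so every horizontal or ascending isogeny is defined over $\FF_{p^2}$. Your item 3 only covers vertices off the floor. Item 4 merely asserts that on the floor the ascending edge (when $h\ge 1$) and all horizontal edges (when $h=0$) lie in $G_\ell(\FF_{p^2})$.

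More importantly, this is what proves that every component actually contains an order with $\ell\nmid[\mathcal{O}_K:\mathcal{O}_0]$: from any vertex with $\ell\mid f$, including a floor vertex, you can ascend inside $G_\ell(\FF_{p^2})$. Your definition of $\mathcal{O}_0$ as ``the order maximal at $\ell$ among those occurring in $V$'' presupposes this. Without it, neither the surface count $1+(\frac{D_0}{\ell})$ nor the absence of horizontal edges below $V_0$ follows.

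\textbf{The index lemma.} The inclusion you state is backwards. It should read $\phi\,\mathrm{End}(E)\,\hat\phi\subseteq\mathrm{End}(E')$ and $\hat\phi\,\mathrm{End}(E')\,\phi\subseteq\mathrm{End}(E)$, i.e.\ $\ell\mathcal{O}\subseteq\mathcal{O}'$ and $\ell\mathcal{O}'\subseteq\mathcal{O}$ inside $K$. Then $\ZZ+\ell\mathcal{O}\subseteq\mathcal{O}'$ and $\ZZ+\ell\mathcal{O}'\subseteq\mathcal{O}$, and comparing conductors gives the trichotomy.
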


\begin{proposition}
[{Sutherland \cite[Remark 8]{Sutherland:ANTSX}}]
\label{prop:volcano_graph_0_1728}
Let $V$ be an ordinary component of $G_{\ell}(\FF_{p^2})$ that contains $0$ or $1728$.
Then $V$ is of one of the following types:
\begin{itemize}
\item
$V$ is (when viewed as an undirected graph) an $\ell$-volcano with height $h = 0$ as in Proposition \ref{prop:volcano_graph}.
\item
$V$ satisfies the conditions for an $\ell$-volcano with height $h$ as in Proposition \ref{prop:volcano_graph} except that the surface $V_0$ consists of a single vertex $0$; $V_1$ consists of $(\ell - (\frac{-3}{\ell})) / 3$ vertices; and each vertex in $V_1$ has three incoming edges from $0$ but only one outgoing edge to $0$.
\item
$V$ satisfies the conditions for an $\ell$-volcano with height $h$ as in Proposition \ref{prop:volcano_graph} except that the surface $V_0$ consists of a single vertex $1728$; $V_1$ consists of $(\ell - (\frac{-1}{\ell})) / 2$ vertices; and each vertex in $V_1$ has two incoming edges from $1728$ but only one outgoing edge to $1728$.
\end{itemize}
\end{proposition}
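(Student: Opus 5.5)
The plan is to rerun the Kohel--Sutherland analysis behind Proposition \ref{prop:volcano_graph}, checking where it uses $\mathrm{Aut}(E) = \{\pm 1\}$. Recall the standard set-up. Fix the Frobenius trace $t$ of the component $V$. Each $E/\FF_{p^2}$ with $j(E) \in V$ has endomorphism ring an order $\mathcal{O}$ with $\ZZ[\pi_E] \subseteq \mathcal{O} \subseteq \mathcal{O}_K$, where $K = \mathbb{Q}(\sqrt{t^2-4p^2})$. The level of $E$ is the $\ell$-adic valuation of the conductor of $\mathcal{O}$, measured relative to the $\ell$-maximal order $\mathcal{O}_0$. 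Among the $\ell+1$ kernels of $\ell$-isogenies from $E$, the following hold:
\begin{itemize}
\item the horizontal ones correspond to invertible $\mathcal{O}$-ideals of norm $\ell$, and there are $1 + (\frac{D}{\ell})$ of them when $\mathcal{O}$ is $\ell$-maximal, with $D = \mathrm{disc}(\mathcal{O})$;
\item there is exactly one ascending kernel when $\mathcal{O}$ is not $\ell$-maximal;
\item the remaining kernels are descending.
\end{itemize}
These facts are purely about kernels and endomorphism rings, so they remain valid at $j = 0$ and $j = 1728$.

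The only thing that changes is how kernels are turned into edges. Two kernels $C, C'$ give the same target $j$-invariant exactly when $C' = \alpha(C)$ for some $\alpha \in \mathrm{Aut}(E)$. Take $0 \in V$; the case $1728$ is identical with $\ZZ[i]$ and $4$ in place of $\ZZ[\omega]$ and $6$. Then $\mathrm{End}(E) \supseteq \ZZ[\omega] = \mathcal{O}_{\mathbb{Q}(\sqrt{-3})}$, which is maximal, so $0$ lies on the surface. Since $\ZZ[\omega]$ has class number $1$, the surface $V_0$ consists of the single vertex $0$, and horizontal edges at $0$ are self-loops.

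\emph{Case $h = 0$.} Then $V = V_0 = \{0\}$ and the surface condition of Definition \ref{defn:volcano} holds trivially. This gives the first bullet.

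\emph{Case $h \geq 1$.} The group $\mathrm{Aut}(E)/\{\pm1\}$ has order $3$ and acts on the $\ell+1$ kernels. A kernel fixed by $\omega$ is an $\omega$-stable subgroup, that is, an eigenline of $\omega$ on $E[\ell]$. Such a subgroup is the kernel of an ideal of $\ZZ[\omega]$ of norm $\ell$, hence horizontal. Therefore the group acts freely on the $\ell - (\frac{-3}{\ell})$ descending kernels, and they fall into $(\ell - (\frac{-3}{\ell}))/3$ orbits. Each orbit gives one vertex of $V_1$, reached from $0$ by three incoming edges. Conversely, a curve $E'$ in $V_1$ has a unique ascending kernel, and it leads to $0$, so $E'$ has exactly one outgoing edge to $0$.

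For levels $i \geq 1$ we have $\mathrm{End}(E') \neq \mathcal{O}_0$, hence $\mathrm{Aut}(E') = \{\pm 1\}$. The edges then pair up in the usual way via duals, and the remaining volcano conditions follow exactly as in Proposition \ref{prop:volcano_graph}, with the same height formula.

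I expect the main obstacle to be the bookkeeping of edge multiplicities in the directed graph $G_\ell(\FF_{p^2})$, where edges are kernels modulo equality. Concretely, one must check that the dual of each of the three kernels in an orbit is the same ascending kernel from the target curve. I would do this by composing with $\omega$: if $\phi' = \phi \circ \omega$, then $\widehat{\phi'} = \omega^{-1} \circ \widehat{\phi}$, which has the same kernel as $\widehat{\phi}$. This explains the asymmetry of three edges in versus one edge out. A secondary point is to justify that the $\omega$-fixed kernels are exactly the horizontal ones. For this I would diagonalise $\omega$ on $E[\ell]$ according to whether $\ell$ splits, ramifies or is inert in $\ZZ[\omega]$.
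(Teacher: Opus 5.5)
The paper gives no proof of this proposition; it only cites Sutherland's Remark 8. Your strategy, letting $\mathrm{Aut}(E)/\{\pm1\}$ act on the $\ell$-isogeny kernels at the CM vertex and counting orbits, is the standard argument behind that remark. Most of your steps hold:
\begin{itemize}
\item an $\omega$-stable kernel is $E[\mathfrak{l}]$ for an ideal of norm $\ell$, so the order-$3$ group acts freely on the $\ell-(\frac{-3}{\ell})$ descending kernels;
\item your computation $\widehat{\phi\circ\omega}=\omega^{-1}\circ\widehat{\phi}$ correctly explains ``three edges in, one edge out''.
\end{itemize}

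\textbf{The gap.} The principle your count rests on is false as stated. You claim that two kernels $C,C'$ give the same target $j$-invariant exactly when $C'=\alpha(C)$ for some $\alpha\in\mathrm{Aut}(E)$. Counterexample: take $p\equiv 1\pmod 3$ and $\ell\equiv 1\pmod 3$, so $\ell=\mathfrak{l}\bar{\mathfrak{l}}$ in $\ZZ[\omega]$. The two horizontal kernels $E[\mathfrak{l}]\neq E[\bar{\mathfrak{l}}]$ at $j=0$ both lead to $j=0$. Yet each is fixed by every automorphism, so they are not related by one.

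You use the ``only if'' direction essentially. It is what gives that distinct orbits of descending kernels yield distinct vertices of $V_1$, so that $|V_1|$ equals the number of orbits. It is also what gives that each vertex of $V_1$ receives exactly three edges from $0$, not more. For descending kernels this direction is true, but it must be proved from the uniqueness of the ascending kernel at level $1$:
\begin{itemize}
\item Let $\phi\colon E\to E/C$ and $\phi'\colon E\to E/C'$ be descending, and let $\iota\colon E/C\to E/C'$ be an isomorphism.
\item Then $\widehat{\phi}\circ\iota^{-1}$ and $\widehat{\phi'}$ are both ascending $\ell$-isogenies from the level-$1$ curve $E/C'$, so they have the same kernel.
\item Hence $\widehat{\phi'}=\alpha\circ\widehat{\phi}\circ\iota^{-1}$ for some $\alpha\in\mathrm{Aut}(E)$.
\item Dualizing gives $\phi'=\iota\circ\phi\circ\alpha^{-1}$, that is, $C'=\alpha(C)$.
\end{itemize}
Replace your general claim by this restricted statement.

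\textbf{Rationality, which you should make explicit.} Edges of $G_\ell(\FF_{p^2})$ come only from $\FF_{p^2}$-rational kernels. Your Kohel-type list, however, describes all $\ell+1$ kernels over $\overline{\FF_p}$.
\begin{itemize}
\item In the case $h\ge 1$, you need all $\ell+1$ kernels at $0$ to be rational. This holds because $\ell$ divides $[\ZZ[\omega]:\ZZ[\pi]]$, so $\pi\in\ZZ+\ell\ZZ[\omega]$ acts as a scalar on $E[\ell]$.
\item In the case $h=0$, the index is prime to $\ell$, so $\ZZ[\pi]+\ell\ZZ[\omega]=\ZZ[\omega]$. The $\pi$-stable lines in $E[\ell]$ are therefore exactly the $\omega$-stable ones. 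This is precisely why $V=\{0\}$ carries only the $1+(\frac{-3}{\ell})$ self-loops and no descending edges.
\end{itemize}

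The analogous points apply verbatim at $j=1728$ with $\ZZ[i]$ in place of $\ZZ[\omega]$. With these two repairs your argument is complete.
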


Let $V$ be an ordinary component of $G_{\ell}(\FF_{p^2})$ as in Proposition \ref{prop:volcano_graph} or Proposition \ref{prop:volcano_graph_0_1728}.
Then for any $\ell$-isogeny $\phi \colon E \to E'$ with $j(E),j(E') \in V$, we say that $\phi$ is an \emph{ascending isogeny} if $j(E) \in V_i$ and $j(E') \in V_{i-1}$ for some index $i$, and $\phi$ is a \emph{descending isogeny} if $j(E) \in V_i$ and $j(E') \in V_{i+1}$ for some index $i$.

In \cite[Equation (2)]{DBLP:journals/lmsjcm/Sutherland12a}, an upper bound $h < \log_{\ell} \sqrt{4p^2}$ for the height $h$ of an ordinary component of $G_{\ell}(\FF_{p^2})$ is given.
Here we slightly improve the bound for $h$; we note that a similar bound for the case $\ell = 2$ was given in \cite{hashimoto2024boundsheights2isogenygraphs}.

\begin{proposition}
\label{prop:bound_for_height}
Assume that $\ell \geq 3$.
Then the height $h$ of any ordinary component of $G_{\ell}(\FF_{p^2})$ (as an $\ell$-volcano) satisfies $h \leq \lfloor \log_{\ell}(4p) / 2 \rfloor$.
\end{proposition}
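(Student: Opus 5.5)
We start from the height formula of Proposition \ref{prop:volcano_graph}: $h = \nu_{\ell}((t^2 - 4p^2)/D_0)/2$, where $t$ is the trace of the $p^2$-power Frobenius. The components covered by Proposition \ref{prop:volcano_graph_0_1728} have the same height formula, so they need no separate treatment. Put $N := 4p^2 - t^2 = |t^2 - 4p^2|$. Since $E$ is ordinary, $p \nmid t$, and by Hasse $|t| \leq 2p$. The equality $|t| = 2p$ would force $p \mid t$, so in fact $0 < N \leq 4p^2$. Write $N = |D_0| \cdot m^2$, where $m^2$ is the square of the conductor index relative to $\mathcal{O}_0$. Then $\ell^{2h} \mid m^2$, and hence $\ell^{2h} \leq N/|D_0|$.

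The key point is to beat the trivial bound $\ell^{2h} \leq 4p^2$, i.e.\ $h \leq \log_\ell(2p)$, by a factor of $2$ in the exponent. For this we use the ordinarity more cleverly: $t^2 - 4p^2 = (t-2p)(t+2p)$, and with $p \nmid t$ the two factors $t - 2p$ and $t + 2p$ have $\gcd$ dividing $4p$. Since $\ell \neq p$ and $\ell \geq 3$ is odd, $\ell$ does not divide $4p$, so $\ell$ divides at most one of the factors. Therefore $\ell^{2h}$ divides either $t - 2p$ or $t + 2p$, so $\ell^{2h} \leq |t \pm 2p| \leq 4p$, with the extreme value attained only at $|t| = 2p$, which is excluded. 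This gives $2h \leq \log_\ell(4p)$, and since $h$ is an integer, $h \leq \lfloor \log_\ell(4p)/2 \rfloor$.

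The main obstacle is making sure that $\ell^{2h}$ divides $t^2 - 4p^2$ itself and not just $(t^2 - 4p^2)/D_0$. This holds because $D_0$ is an integer, so $\nu_\ell(t^2 - 4p^2) \geq \nu_\ell((t^2 - 4p^2)/D_0) = 2h$. We also need to justify the gcd claim: $\gcd(t - 2p, t + 2p) \mid 4p$ follows from the difference being $4p$. These checks are routine. The essential step is the splitting of $t^2 - 4p^2$ into two nearly coprime factors, each of absolute value at most $4p$.
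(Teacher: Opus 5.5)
Your proof is correct and follows essentially the same route as the paper. Both arguments bound $h$ by $\nu_{\ell}(t^2-4p^2)/2$ and split $t^2-4p^2=(t-2p)(t+2p)$. Since the two factors differ by $4p$ and $\ell\nmid 4p$, the full $\ell$-adic valuation must lie in one factor, and ordinarity plus Hasse ($|t|<2p$) then gives $\ell^{2h}\le|t\pm 2p|<4p$. The detour through the conductor index $m$ is unnecessary but harmless.
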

\begin{proof}
By the formula $h = \nu_{\ell}((t^2 - 4p^2) / D_0) / 2$ of the height $h$ in Proposition \ref{prop:volcano_graph}, we have $h \leq \nu_{\ell}(t^2 - 4p^2) / 2$.
Now we have $|t| \leq 2 \sqrt{p^2} = 2p$ by the Hasse bound, while we have $t \not\equiv 0 \pmod{p}$ as we now consider an ordinary component (see e.g., \cite[Proof of Theorem V.4.1]{Silverman:AEC}), therefore we have $|t| < 2p$.
Now we have
\[
\nu_{\ell}(t^2 - 4p^2)
= \nu_{\ell}((t + 2p)(t - 2p))
= \nu_{\ell}(t + 2p) + \nu_{\ell}(t - 2p) \enspace.
\]
On the other hand, we have $(t + 2p) - (t - 2p) = 4p$, while $\nu_{\ell}(4p) = 0$ as $\ell \neq 2$ and $\ell \neq p$, therefore we have $\nu_{\ell}(t + 2p) = 0$ or $\nu_{\ell}(t - 2p) = 0$.
Hence we have $\nu_{\ell}(t^2 - 4p^2) = \nu_{\ell}(t \pm 2p)$ for some sign $\pm$, therefore
\[
\nu_{\ell}(t^2 - 4p^2)
= \nu_{\ell}(t \pm 2p)
\leq \log_{\ell}(|t \pm 2p|)
\leq \log_{\ell}(|t| + 2p)
< \log_{\ell}(2p + 2p)
= \log_{\ell}(4p) \enspace.
\]
This implies that $h \leq \nu_{\ell}(t^2 - 4p^2) / 2 < \log_{\ell}(4p) / 2$, therefore we have $h \leq \lfloor \log_{\ell}(4p) / 2 \rfloor$, as desired.
\end{proof}

\section{$3$-Isogeny Walks with Twisted Hessian Curves}
\label{sec:3-isogeny_walk}

In this section, we study some properties of $3$-isogenies between twisted Hessian Curves.

\subsection{General Case}
\label{sec:3-isogeny_walk:general}

First, the following property is used for determining the backtracking edge in a walk on the $3$-isogeny graph.

\begin{proposition}
\label{prop:backtracking}
Let $\phi \colon H(a,d) \to H(A,D)$ be either one of the three $3$-isogenies of type $1$ from a twisted Hessian curve $H(a,d)$ (as in Proposition \ref{prop:3-isogeny_Hesse_type1}) or the $3$-isogeny of type $2$ from $H(a,d)$ (as in Proposition \ref{prop:3-isogeny_Hesse_type2}).
Then the $3$-isogeny $\psi \colon H(A,D) \to H(a',d')$ of type $2$ from the twisted Hessian curve $H(A,D)$ satisfies $H(a',d') \simeq H(a,d)$.
\end{proposition}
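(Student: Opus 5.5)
The plan is to identify the kernel of the type-$2$ isogeny $\psi$ on $H(A,D)$ with the kernel of the dual isogeny $\hat\phi \colon H(A,D) \to H(a,d)$. Once $\ker\psi = \ker\hat\phi$, the two isogenies $\psi$ and $\hat\phi$ are separable isogenies from $H(A,D)$ with the same kernel. Hence their targets are isomorphic (over $\overline{\FF_p}$), which gives $H(a',d') \simeq H(a,d)$. Here "$\simeq$" is understood as equality of $j$-invariants, i.e.\ the same vertex of $G_3(\FF_{p^2})$.

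The standard fact I would use is $\ker\hat\phi = \phi(H(a,d)[3])$. This is a subgroup of order $3$, since $\phi$ restricted to $H(a,d)[3] \cong (\ZZ/3\ZZ)^2$ has kernel $\ker\phi$ of order $3$. By Proposition \ref{prop:3-isogeny_Hesse_type2}, $\ker\psi = \{(0:-1:1),(0:-\omega:1),(0:-\omega^2:1)\}$ on $H(A,D)$, which also has order $3$. It therefore suffices to exhibit a single point $P \in H(a,d)[3] \setminus \ker\phi$ whose image $\phi(P)$ is a nonidentity element of $\ker\psi$.

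For $\phi$ of type $1$ associated with $c$, I take $P = (0:-\omega:1)$. It lies in $H(a,d)[3]$ (it is the kernel point of the type-$2$ isogeny from $H(a,d)$, cf.\ \cite[Theorem 5.1]{LC:BCKL15}), and it is not in $\ker\phi = \{(0:-1:1),(1:-c:0),(1:0:-c)\}$. Substituting $X=0$ into the formula of Proposition \ref{prop:3-isogeny_Hesse_type1} gives
\[
\phi(P) = (0 : YZ^2 : Y^2Z) = (0:-\omega:\omega^2) = (0:-\omega^2:1) \in \ker\psi .
\]

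For $\phi$ of type $2$, I take $P = (1:-c:0)$ with $c^3 = a$, where $c \in \overline{\FF_p}$. It is a $3$-torsion point, being a kernel point of a type-$1$ isogeny, and it is not in $\ker\phi$. Proposition \ref{prop:3-isogeny_Hesse_type2} gives
\[
\phi(P) = (0 : a - \omega^2 a : a - \omega a) = (0 : 1-\omega^2 : 1-\omega) = (0 : 1+\omega : 1) = (0:-\omega^2:1),
\]
which again lies in $\ker\psi$ and is not the identity.

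The main subtlety is the field of definition. The point $P$, the root $c$, and the isomorphism may only exist over $\overline{\FF_p}$. This is harmless, because the claim concerns isomorphism classes, i.e.\ $j$-invariants, and kernels can be compared over $\overline{\FF_p}$. We also need $\omega \in \FF_q$ so that $\psi$ is defined; this is implicit in the statement. The remaining routine check is the identity $\ker\hat\phi = \phi(E[3])$, which follows from $\hat\phi\circ\phi = [3]$ together with $|\ker\hat\phi| = 3$.
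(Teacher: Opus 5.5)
Your proof is correct and is essentially the paper's argument. You use the same test points $(0:-\omega:1)$ and $(1:-c:0)$ with the same image computations landing in $\ker\psi$. The only difference is framing: you conclude via $\ker\psi=\ker\hat\phi=\phi(H(a,d)[3])$, whereas the paper concludes that $\ker(\psi\circ\phi)=H(a,d)[3]$, so $\psi\circ\phi$ is $[3]$ up to isomorphism; the two formulations are equivalent.
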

\begin{proof}
First we consider the case that $\phi$ is of type $1$.
By the shape of $\phi$ as in Proposition \ref{prop:3-isogeny_Hesse_type1}, the image of the point $(0:-\omega:1)$ of $H(a,d)$ by $\phi$, where $\omega^3 = 1$ and $\omega \neq 1$, is
\[
\phi(0:-\omega:1)
= ( 0 : -\omega \cdot 1^2 : (-\omega)^2 \cdot 1 )
= ( 0 : -\omega : \omega^2 )
= ( 0 : -\omega^2 : \omega^3 )
= ( 0 : -\omega^2 : 1 ) \enspace,
\]
therefore $\phi(0:-\omega:1) \in \ker \psi$.
Hence the composite isogeny $\psi \circ \phi \colon H(a,d) \to H(a',d')$ annihilates both $(1:-c:0)$ and $(0:-\omega:1)$, which generate the $3$-torsion subgroup $H(a,d)[3] \simeq \ZZ/3\ZZ \oplus \ZZ/3\ZZ$, therefore we have $\ker (\psi \circ \phi) = H(a,d)[3]$.
Hence $\psi \circ \phi$ is equal to the multiplication-by-$3$ map $[3]$ on $H(a,d)$ up to isomorphism, therefore $H(a',d') \simeq H(a,d)$, as desired.

Similarly, in the case that $\phi$ is of type $2$, by the shape of $\phi$ as in Proposition \ref{prop:3-isogeny_Hesse_type2}, the image of the point $(1:-c:0)$ of $H(a,d)$ by $\phi$, where $c^3 = a$, is
\[
\begin{split}
\phi(1:-c:0)
= ( 0 : a + \omega^2 (-c)^3 : a + \omega (-c)^3 )
&= ( 0 : a - \omega^2 a : a - \omega a ) \\
&= ( 0 : 1 - \omega^2 : 1 - \omega ) \\
&= ( 0 : 1 + \omega : 1 )
= ( 0 : - \omega^2 : 1 ) \enspace,
\end{split}
\]
therefore $\phi(1:-c:0) \in \ker \psi$.
Hence, by the same argument as the first case above, we have $\ker (\psi \circ \phi) = H(a,d)[3]$ and $\psi \circ \phi$ is equal to $[3]$ up to isomorphism, therefore $H(a',d') \simeq H(a,d)$, as desired.
This completes the proof.
\end{proof}

Secondly, the following property shows that if the domain curve $H(a,d)$ is defined over $\FF_{p^2}$ (i.e., $a,d \in \FF_{p^2}$) and $d \neq 0$, then whether the codomain curve of a $3$-isogeny of type $1$ associated with $c$ (where $c^3 = a$) has $j$-invariant lying in $\FF_{p^2}$ can be determined by just checking if $c \in \FF_{p^2}$.
We note that when $H(a,d)$ is supposed to be supersingular, the statement of Theorem \ref{thm:whether_codomain_is_over_Fp2} below follows from the known fact \cite[Theorem 1.2]{morton2011cubic} that any supersingular $j$-invariant is a cube in $\FF_{p^2}$.
In contrast, here we do not suppose that $H(a,d)$ is supersingular in the statement of Theorem \ref{thm:whether_codomain_is_over_Fp2}.

\begin{theorem}
\label{thm:whether_codomain_is_over_Fp2}
Let $H(a,d)$ be a twisted Hessian curve with $a,d \in \FF_{p^2}$ and $d \neq 0$.
Let $\phi \colon H(a,d) \to H(A,D)$ be a $3$-isogeny of type $1$ associated with $c \in \overline{\FF_p}$, where $c^3 = a$, $A = d^2 c + 3 d c^2 + 9a$, and $D = d + 6c$.
If $j(H(A,D)) \in \FF_{p^2}$, then $c \in \FF_{p^2}$.
\end{theorem}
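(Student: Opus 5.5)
The plan is to argue by contradiction using the Galois action on the cube roots of $a$, and then reduce the claim to an explicit polynomial identity in $c$ and $d$. Suppose $c \notin \FF_{p^2}$. Since $p^2 \equiv 1 \pmod{3}$, the field $\FF_{p^2}$ contains a primitive cube root of unity $\omega$. Hence $x^3 - a$ is irreducible over $\FF_{p^2}$, because a cubic with one root in $\FF_{p^2}$ would have all its roots $c,\omega c,\omega^2 c$ there. So $K := \FF_{p^2}(c)$ is a cubic extension, and a generator $\sigma$ of $\mathrm{Gal}(K/\FF_{p^2})$ satisfies $\sigma(c) = \omega c$ after possibly replacing $\omega$ by $\omega^2$.

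Next, write $j(H(A,D))$ as an explicit rational function $J(c)$ with coefficients in $\FF_{p^2}(d) = \FF_{p^2}$. Two simplifications make this manageable:
\[
D^3 - 27A = (d+6c)^3 - 27(d^2c + 3dc^2 + 9a) = (d - 3c)^3,
\qquad
A = c\,(d^2 + 3dc + 9c^2) = \frac{c\,(d^3 - 27a)}{d - 3c},
\]
where the second uses $c^3 = a$. Substituting into \eqref{eq:j-invariant_Hesse} gives
\[
J(c) = \frac{(d+6c)^3\,\bigl(d^3 + 234 d^2 c + 756 d c^2 + 2160 a\bigr)^3}{c\,(d^3 - 27a)\,(d - 3c)^8} \enspace.
\]
Since $a,d$ are fixed by $\sigma$, we have $\sigma(J(c)) = J(\omega c)$. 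So the hypothesis $j(H(A,D)) \in \FF_{p^2}$ forces $J(c) = J(\omega c)$, and likewise $J(c) = J(\omega^2 c)$.

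The core step is to show that $J(c) = J(\omega c)$, with $c^3 = a$ and $a(27a - d^3) \neq 0$, forces $d = 0$. This contradicts the assumption $d \neq 0$.
\begin{itemize}
\item First clear denominators in $J(c) - J(\omega c) = 0$.
\item Then reduce modulo $c^3 = a$ and write the result in the basis $1, c, c^2$ over $\FF_{p^2}$. Because $1, c, c^2$ are linearly independent over $\FF_{p^2}$ in the case $c \notin \FF_{p^2}$, each of the three coefficients must vanish separately.
\item Each coefficient is a polynomial in $a, d$ (and $\omega$). Equivalently, one may work with the single identity $(J(c) - J(\omega c))(J(c) - J(\omega^2 c)) = 0$, eliminating $\omega$ through $\omega^2 + \omega + 1 = 0$.
\item Finally, factor the resulting polynomial in $a$ and $d$. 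The expectation is that it is, up to powers of $a$ and $27a - d^3$, a power of $d$ times a nonzero constant (this constant must be nonzero mod $p$ for $p \ge 5$).
\end{itemize}
A useful sanity check is that when $d = 0$ the curve is $j=0$-related and the three type-1 codomains do coincide. This is consistent with the hypothesis $d \neq 0$ being needed.

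The main obstacle is this elimination and factorization. The degrees are high: the numerator of $J$ has degree about $12$ in $c$. Since it is a routine but heavy symbolic computation, I would do it with computer algebra, tracking the constants to make sure none vanish mod $p$ for $p \geq 5$. If the direct route proves unwieldy, a structural alternative is available. Equality of the three $j$-values means the three type-1 codomains are isomorphic over $\overline{\FF_p}$. By Proposition \ref{prop:backtracking}, each of them maps back to $H(a,d)$ via its type-2 isogeny, so composing would yield three cyclic $9$-isogenies, or endomorphisms, of $H(a,d)$ with distinct kernels. The goal would be to show this forces extra automorphisms, that is $j(H(a,d)) \in \{0,1728\}$, and then to handle those $j$-values by a direct check using \eqref{eq:j-invariant_Hesse}. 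I would try the explicit computation first.
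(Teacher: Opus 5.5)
\textbf{There is a genuine gap in your core step.} Your setup matches the paper's. You assume $c\notin\FF_{p^2}$, use $\sigma$ with $\sigma(c)=\omega c$, and write $j(H(A,D))$ explicitly. Your $J(c)$ is correct: it is the paper's expression with $d_0=d/3$, resting on $D^3-27A=(d-3c)^3$. You then clear denominators, reduce modulo $c^3=a$, and read off three coefficient equations from the independence of $1,c,c^2$.

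The gap is what you expect these coefficients to be. You predict each is, up to powers of $a$ and $27a-d^3$, a constant times a power of $d$. That is false, and so is the claim that the three coefficient equations force $d=0$. The actual coefficients are the polynomials $F_2(z)$, $F_1(z)$, $F_0(z)$ in $z:=d^3/(27a)$, up to the unit factors $2\omega+1$, $\omega+2$, $\omega-1$ and powers of $d$ and $a$. Here $F_2=(z+8)(z^2-20z-8)(z^2+2626z-656)$, whose coefficient also carries the integer $2^4\cdot 3\cdot 5\cdot 7\cdot 13$, and $F_1,F_0$ have degree $6$. The elimination between them produces integer constants with prime factors $5,7,11,13,17,19,23,29,31,37,53$. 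For several of these primes the coefficient system has genuine solutions with $d\neq 0$. For example, for $p\in\{19,31\}$ the remainders of $F_0$ and $F_1$ modulo $z^2-20z-8$ vanish identically, so every root of $z^2-20z-8$ kills all three coefficients. So no bookkeeping of constants can make ``all three coefficients vanish $\Rightarrow d=0$'' hold for every $p\ge 5$.

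\textbf{The missing idea} is a second use of $c\notin\FF_{p^2}$, beyond linear independence. It means $a$ is not a cube in $\FF_{p^2}$, and hence neither is $z=d^3/(27a)$, since $27$ is a cube and $d\neq 0$. The paper's proof finishes exactly this way. For $p$ outside the exceptional set, it shows $F_0,F_1,F_2$ have no common root, by taking remainders of $F_0,F_1$ modulo each factor of $F_2$. For each exceptional prime, it checks that every candidate common root satisfies $z^{(p^2-1)/3}=1$, i.e.\ is a cube in $\FF_{p^2}$. This contradicts $a$ being a non-cube.

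\textbf{Your fallback does not get around this either.} The theorem also covers supersingular $H(a,d)$, and a supersingular curve can have several cyclic degree-$9$ endomorphisms with distinct kernels while its automorphism group is only $\{\pm1\}$. Already for $p=17$, the supersingular $j$-invariant other than $0$ has eight cyclic subgroups $C$ of order $9$ with $E/C\cong E$. So the implication ``three such endomorphisms $\Rightarrow j\in\{0,1728\}$'' is false in general, and a structural argument would need to use the Galois action itself, not just the equality of the $j$-invariants.
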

\begin{proof}
Assume, for the sake of contradiction, that $c \not\in \FF_{p^2}$.
Let $\omega \in \FF_{p^2}$ be an element with $\omega^2 + \omega + 1 = 0$, which is a primitive third root of unity.
Then $a$ has three cubic roots $c$, $\omega c$, and $\omega^2 c$, none of which lies in $\FF_{p^2}$ (as $c \not\in \FF_{p^2}$ and $\omega \in \FF_{p^2}$).
Therefore $X^3 - a$ is the minimal polynomial of $c$ over $\FF_{p^2}$.
Now there is an automorphism $\sigma$ of the extension field $\FF_{p^2}(c)/\FF_{p^2}$ satisfying $\sigma(c) = \omega c$.

Let $d_0 := d/3$.
Then we have
\[
\begin{split}
j(H(A,D))
= \frac{ D^3 }{ A } \frac{ (D^3 + 216A)^3 }{ (D^3 - 27A)^3 }
&= \frac{ 3^3 (d_0 + 2c)^3 }{ 9 (d_0{}^2 c + d_0 c^2 + c^3) } \frac{ ( 3^3 (d_0 + 2c)^3 + 216 A )^3 }{ ( 3^3 (d_0 + 2c)^3 - 27 A )^3 } \\
&= \frac{ 3 (d_0 + 2c)^3 }{ c (d_0{}^2 + d_0 c + c^2) } \frac{ ( (d_0 + 2c)^3 + 8 A )^3 }{ ( (d_0 + 2c)^3 - A )^3 } \enspace.
\end{split}
\]
As $d_0{}^2 + d_0 c + c^2 = (d_0{}^3 - c^3) / (d_0 - c) = (d_0{}^3 - a) / (d_0 - c)$ (note that $d_0 - c \neq 0$, as $d_0 \in \FF_{p^2}$ and $c \not\in \FF_{p^2}$) and
\[
\begin{split}
(d_0 + 2c)^3 - A
&= (d_0{}^3 + 6 d_0{}^2 c + 12 d_0 c^2 + 8 c^3) - 9 (d_0{}^2 c + d_0 c^2 + c^3) \\
&= d_0{}^3 - 3 d_0{}^2 c + 3 d_0 c^2 - c^3
= (d_0 - c)^3 \enspace,
\end{split}
\]
we have
\[
\begin{split}
j(H(A,D))
= \frac{ 3 (d_0 + 2c)^3 }{ c (d_0{}^3 - a) / (d_0 - c) } \frac{ ( (d_0 + 2c)^3 + 8 A )^3 }{ (d_0 - c)^9 }
= \frac{ 3 (d_0 + 2c)^3 }{ c (d_0{}^3 - a) } \frac{ ( (d_0 + 2c)^3 + 8 A )^3 }{ (d_0 - c)^8 }
\in \FF_{p^2} \enspace,
\end{split}
\]
therefore, as $d_0{}^3 - a \in \FF_{p^2}$, we have
\[
\frac{ (d_0 + 2c)^3 }{ c } \frac{ ( (d_0 + 2c)^3 + 8 A )^3 }{ (d_0 - c)^8 }
\in \FF_{p^2} \enspace.
\]
This element is fixed by the automorphism $\sigma$, therefore we have
\[
\frac{ (d_0 + 2c)^3 }{ c } \frac{ ( (d_0 + 2c)^3 + 8 A )^3 }{ (d_0 - c)^8 }
= \frac{ (d_0 + 2 \omega c)^3 }{ \omega c } \frac{ ( (d_0 + 2 \omega c)^3 + 8 \sigma(A) )^3 }{ (d_0 - \omega c)^8 } \enspace.
\]
This implies that
\[
F
:= (d_0 + 2c)^3 ( (d_0 + 2c)^3 + 8 A )^3 \omega (d_0 - \omega c)^8
- (d_0 + 2 \omega c)^3 ( (d_0 + 2 \omega c)^3 + 8 \sigma(A) )^3 (d_0 - c)^8
= 0 \enspace.
\]

Now by expanding $F$ and using the relations $c^3 = a$ and $\omega^2 + \omega + 1 = 0$, it follows from a direct calculation that
\begin{equation}
\label{eq:thm:whether_codomain_is_over_Fp2:1}
\begin{split}
F
&= (F_2 \cdot 2^4 \cdot 3 \cdot 5 \cdot 7 \cdot 13 \cdot (2 \omega + 1) \cdot d_0{}^3 \cdot a^5) \cdot c^2 \\
&\quad + (F_1 \cdot 2 \cdot (\omega + 2) \cdot d_0 \cdot a^6) \cdot c \\
&\quad + (F_0 \cdot (\omega - 1) \cdot d_0{}^2 \cdot a^6) \\
&= 0
\end{split}
\end{equation}
where, by putting $z := d_0{}^3 / a$,
\[
F_2 := z^5 + 2614 z^4 - 32336 z^3 - 433360 z^2 - 57856 z + 41984 \enspace,
\]
\[
F_1 := 4 z^6 + 2790621 z^5 + 427259532 z^4 - 546940856 z^3 - 4822215552 z^2 - 316987392 z + 25907200 \enspace,
\]
\[
F_0 := z^6 + 621168 z^5 - 126479844 z^4 - 4334764688 z^3 - 184901856 z^2 + 4578771456 z + 66734080 \enspace.
\]
Indeed, by putting
\[
\widehat{F}_2
:= F_2 \cdot a^5
= d_0{}^{15} + 2614 d_0{}^{12} a - 32336 d_0{}^9 a^2 - 433360 d_0{}^6 a^3 - 57856 d_0{}^3 a^4 + 41984 a^5 \enspace,
\]
\[
\begin{split}
\widehat{F}_1
:= F_1 \cdot a^6
&= 4 d_0{}^{18} + 2790621 d_0{}^{15} a + 427259532 d_0{}^{12} a^2 - 546940856 d_0{}^9 a^3 \\
&\quad - 4822215552 d_0{}^6 a^4 - 316987392 d_0{}^3 a^5 + 25907200 a^6 \enspace,
\end{split}
\]
\[
\begin{split}
\widehat{F}_0
:= F_0 \cdot a^6
&= d_0{}^{18} + 621168 d_0{}^{15} a - 126479844 d_0{}^{12} a^2 - 4334764688 d_0{}^9 a^3 \\
&\quad - 184901856 d_0{}^6 a^4 + 4578771456 d_0{}^3 a^5 + 66734080 a^6 \enspace,
\end{split}
\]
we have
\begin{equation}
\label{eq:thm:whether_codomain_is_over_Fp2:2}
\begin{split}
F
= (\widehat{F}_2 \cdot 2^4 \cdot 3 \cdot 5 \cdot 7 \cdot 13 \cdot (2 \omega + 1) \cdot d_0{}^3) \cdot c^2 
+ (\widehat{F}_1 \cdot 2 \cdot (\omega + 2) \cdot d_0) \cdot c 
+ (\widehat{F}_0 \cdot (\omega - 1) \cdot d_0{}^2)
\end{split}
\end{equation}
in the quotient ring $\ZZ[a,d_0,\omega,c] / (c^3 - a, \omega^2 + \omega + 1)$ when $a$, $d_0$, $\omega$, and $c$ are temporarily regarded as indeterminates, therefore Eq.\eqref{eq:thm:whether_codomain_is_over_Fp2:2} still holds when taking reduction modulo $p$ and substituting the actual values of $a$, $d_0$, $\omega$, and $c$.
Hence Eq.\eqref{eq:thm:whether_codomain_is_over_Fp2:1} holds in $\overline{\FF_p}$.

Regarding Eq.\eqref{eq:thm:whether_codomain_is_over_Fp2:1}, as $F_0,F_1,F_2,d_0,a,\omega \in \FF_{p^2}$ and $X^3 - a$ is the minimal polynomial of $c$ over $\FF_{p^2}$, it follows that
\[
\begin{cases}
F_2 \cdot 2^4 \cdot 3 \cdot 5 \cdot 7 \cdot 13 \cdot (2 \omega + 1) \cdot d_0{}^3 \cdot a^5 = 0 \enspace,\\
F_1 \cdot 2 \cdot (\omega + 2) \cdot d_0 \cdot a^6 = 0 \enspace,\\
F_0 \cdot (\omega - 1) \cdot d_0{}^2 \cdot a^6 = 0 \enspace.
\end{cases}
\]
Now if $\omega + 2 = 0$ in $\FF_{p^2}$, then we have $\omega = -2$ and $1 = \omega^3 = -8$, therefore $9 = 0$, contradicting the fact $p \neq 3$.
Hence we have $\omega + 2 \neq 0$.
Similarly, if $2 \omega + 1 = 0$ in $\FF_{p^2}$, then we have $\omega = -1/2$ and $1 = \omega^3 = -1/8$, therefore $9 = 0$, contradicting the fact $p \neq 3$.
Hence we have $2 \omega + 1 \neq 0$.
By this and the facts $\omega \neq 1$, $p \not\in \{2,3\}$, $d_0 \neq 0$, and $a \neq 0$, it follows that
\[
F_2 \cdot 5 \cdot 7 \cdot 13 = 0 \,,\,
F_1 = 0 \,,\,
F_0 = 0 \enspace.
\]
We divide the proof into the following four cases.

\paragraph{Case 1: $p \not\in \{5,7,13\}$.}

In this case, we have $F_2 = F_1 = F_0 = 0$.
Now $F_2$ can be factored as
\[
F_2 = (z + 8) \cdot (z^2 - 20 z - 8) \cdot (z^2 + 2626 z - 656) \enspace,
\]
therefore we have $z + 8 = 0$, $z^2 - 20 z - 8 = 0$, or $z^2 + 2626 z - 656 = 0$.
We divide the proof into the following three cases.

\paragraph{Case 1-1: $z + 8 = 0$.}

Now the remainder of the polynomial $F_0$ modulo $z + 8$ is $1632586752000 = 2^{13} \cdot 3^{13} \cdot 5^3$, which must be zero as $F_0 = 0$ and $z + 8 = 0$.
This contradicts the assumption $p \not\in \{2,3,5\}$.

\paragraph{Case 1-2: $z^2 - 20 z - 8 = 0$.}

Now the remainders of the polynomials $F_0$ and $F_1$ modulo $z^2 - 20 z - 8$ are
\[
-2714622253056 z - 1066458742272
= -2^9 \cdot 3^8 \cdot 7^2 \cdot 19 \cdot 31 \cdot (28 z + 11)
\]
and
\[
3708367899264 z + 1454261921280
= 2^7 \cdot 3^9 \cdot 7^2 \cdot 19 \cdot 31 \cdot (51 z + 20) \enspace,
\]
respectively.
These two polynomials must be zero as $F_0 = F_1 = 0$ and $z^2 - 20 z - 8 = 0$.
We divide the proof into the following two cases.

\paragraph{Case 1-2-1: $p \not\in \{19,31\}$.}

In this case, by the assumption $p \not\in \{2,3,7\}$, we have $28z + 11 = 51z + 20 = 0$.
Hence
\[
0 = 51 \cdot (28 z + 11) - 28 \cdot (51 z + 20)
= 1 \enspace,
\]
a contradiction.

\paragraph{Case 1-2-2: $p \in \{19,31\}$.}

In this case, for any choice of $p$, we have $z^{(p^2 - 1) / 3} \bmod (z^2 - 20z - 8) = 1$ as polynomials in $z$ over $\FF_p$, therefore the element $z \in \FF_{p^2}$ satisfying $z^2 - 20z - 8 = 0$ also satisfies $z^{(p^2 - 1) / 3} = 1$.
Hence $z = d_0{}^3 / a$ is a cube in $\FF_{p^2}$.
This implies that $a$ is also a cube in $\FF_{p^2}$, contradicting the assumption that $c \not\in \FF_{p^2}$.

\paragraph{}

Hence we have a contradiction in all subcases for Case 1-2.

\paragraph{Case 1-3: $z^2 + 2626 z - 656 = 0$.}

Now the remainders of the polynomials $F_0$ and $F_1$ modulo $z^2 + 2626 z - 656$ are
\[
31682886407915875200 z - 7913936287671782400
= 2^7 \cdot 3^8 \cdot 5^2 \cdot 37 \cdot 53 \cdot (769532791 z - 192218392)
\]
and
\[
124498590050861874000 z - 31097984477624784000
= 2^4 \cdot 3^8 \cdot 5^3 \cdot 37 \cdot 53 \cdot (4838233297 z - 1208522152) \enspace,
\]
respectively.
These two polynomials must be zero as $F_0 = F_1 = 0$ and $z^2 + 2626 z - 656 = 0$.
We divide the proof into the following two cases.

\paragraph{Case 1-3-1: $p \not\in \{11,17,23,29,37,53\}$.}

In this case, by the assumption $p \not\in \{2,3,5\}$, we have
\[
769532791 z - 192218392
= 4838233297 z - 1208522152
= 0 \enspace.
\]
Hence
\[
\begin{split}
0 &= 4838233297 \cdot (769532791 z - 192218392) - 769532791 \cdot (4838233297 z - 1208522152) \\
&= 143687808
= 2^7 \cdot 3^2 \cdot 11 \cdot 17 \cdot 23 \cdot 29 \enspace,
\end{split}
\]
contradicting the assumption $p \not\in \{2,3,11,17,23,29\}$.

\paragraph{Case 1-3-2: $p \in \{11,17,23,29,37,53\}$.}

In this case, similarly to Case 1-2-2, for any choice of $p$, we have $z^{(p^2 - 1) / 3} \bmod (z^2 + 2626 z - 656) = 1$ as polynomials in $z$ over $\FF_p$, therefore the element $z \in \FF_{p^2}$ satisfying $z^2 + 2626 z - 656 = 0$ also satisfies $z^{(p^2 - 1) / 3} = 1$.
This implies a contradiction in a way similar to Case 1-2-2.

\paragraph{}

Hence we have a contradiction in all subcases for Case 1-3, therefore we have a contradiction in all subcases for Case 1.

\paragraph{Case 2: $p = 5$.}

In this case, we have
\[
\gcd(F_0,F_1)
= z^5 + 2 z^4 + 4 z^3 + 3 z^2 + z
= z (z + 3)^4
\]
as polynomials in $z$ over $\FF_p$, therefore $z (z + 3)^4 = 0$ as $F_1 = F_0 = 0$.
This implies that $z = -3$ (as $z = d_0{}^3 / a \neq 0$), which is a cube in $\FF_p$ (as now $p \equiv 2 \pmod{3}$).
This implies a contradiction in a way similar to Case 1-2-2.

\paragraph{Case 3: $p = 7$.}

In this case, we have
\[
\gcd(F_0,F_1)
= z^4 + 2 z^3 + 6 z^2 + 5 z + 1
= (z^2 + z + 6)^2
\]
as polynomials in $z$ over $\FF_p$, therefore $(z^2 + z + 6)^2 = 0$ as $F_1 = F_0 = 0$, hence $z^2 + z + 6 = 0$.
Now we have $z^{(p^2 - 1) / 3} \bmod (z^2 + z + 6) = 1$ as polynomials in $z$ over $\FF_p$.
This implies a contradiction in a way similar to Case 1-2-2.

\paragraph{Case 4: $p = 13$.}

In this case, we have
\[
\gcd(F_0,F_1)
= z^4 + 6 z^3 + 12 z^2 + 3 z + 5
= (z^2 + z + 12) (z^2 + 5 z + 8)
\]
as polynomials in $z$ over $\FF_p$, therefore $z^2 + z + 12 = 0$ or $z^2 + 5 z + 8 = 0$ as $F_1 = F_0 = 0$.
Now we have $z^{(p^2 - 1) / 3} \bmod (z^2 + z + 12) = 1$ and $z^{(p^2 - 1) / 3} \bmod (z^2 + 5 z + 8) = 1$ as polynomials in $z$ over $\FF_p$.
This implies a contradiction in a way similar to Case 1-2-2.

\paragraph{}

Hence we have a contradiction in any case.
Therefore we have $c \in \FF_{p^2}$, as desired.
This concludes the proof of Theorem \ref{thm:whether_codomain_is_over_Fp2}.
\end{proof}

Now we have the following corollaries of Theorem \ref{thm:whether_codomain_is_over_Fp2}.
For the first corollary, we note that if a twisted Hessian curve $H(a,d)$ satisfies $d = 0$, then we have $j(H(a,0)) = 0$ regardless of $a$, therefore $H(a,0)$ is isomorphic to $H(1,0)$.

\begin{corollary}
\label{cor:when_j_invariant_is_a_cube}
Let $j_0 \in \FF_{p^2}$, and let $H(a,d)$ be a twisted Hessian curve with $a,d \in \FF_{p^2}$ satisfying $j(H(a,d)) = j_0$.
Suppose moreover that $a = 1$ when $d = 0$.
Then we are in precisely one of the following three cases:
\begin{itemize}
\item
$j_0$ is supersingular, $a$ is a cube in $\FF_{p^2}$, and $j_0$ is a cube in $\FF_{p^2}$.
\item
$j_0$ is ordinary and does not lie on the floor $V_h$ of the connected component $V$ of $G_3(\FF_{p^2})$ containing $j_0$, $j_0$ has outdegree $4$ in $V$, $a$ is a cube in $\FF_{p^2}$, and $j_0$ is a cube in $\FF_{p^2}$.
\item
$j_0$ is ordinary and lies on the floor $V_h$ of the connected component $V$ of $G_3(\FF_{p^2})$ containing $j_0$, $j_0$ has outdegree $1$ in $V$, $a$ is not a cube in $\FF_{p^2}$, and $j_0$ is not a cube in $\FF_{p^2}$.
\end{itemize}
\end{corollary}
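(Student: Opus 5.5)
The plan is to split according to whether $a$ is a cube in $\FF_{p^2}$, and in each branch to count the outgoing $3$-isogenies from $j_0$ inside $G_3(\FF_{p^2})$.

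Fix $\omega \in \FF_{p^2}$ with $\omega^2+\omega+1=0$. Since $\omega \in \FF_{p^2}$, the type~2 isogeny of Proposition \ref{prop:3-isogeny_Hesse_type2} is always defined over $\FF_{p^2}$. Its codomain $H(d^3-27a,3d)$ therefore has $j$-invariant in $\FF_{p^2}$, so $j_0$ always has outdegree at least $1$.

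The four kernels $\langle (0:-\omega:1)\rangle$ and $\langle (1:-\omega^i c:0)\rangle$ for $i=0,1,2$ are exactly the four subgroups of order $3$ of $H(a,d)[3]$. Hence these four isogenies account for all edges out of $j_0$.

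\textbf{Case $a$ a cube in $\FF_{p^2}$.} All three values $\omega^i c$ lie in $\FF_{p^2}$, so all four isogenies are defined over $\FF_{p^2}$ and $j_0$ has outdegree $4$. Then $j_0$ is not on the floor: if $j_0$ is ordinary, a floor vertex with $h\ge1$ has exactly one neighbour in $G_3(\FF_{p^2})$, and if $h=0$ one must argue separately.

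The $h=0$ subcase needs care. On an $\ell$-volcano of height $0$ every vertex is both surface and floor, with degree at most $2$. Outdegree $4$ therefore forces $j_0$ to be supersingular, or to be a non-floor vertex of a volcano with $h\ge1$ (including the special components at $0$ and $1728$ of Proposition \ref{prop:volcano_graph_0_1728}).

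Next I show $j_0$ is a cube. Equation \eqref{eq:j-invariant_Hesse} gives
\[
j_0 = \frac{d^3}{a}\,\frac{(d^3+216a)^3}{(d^3-27a)^3}=\Bigl(\frac{d}{c}\cdot\frac{d^3+216a}{d^3-27a}\Bigr)^3 .
\]
Since $c \in \FF_{p^2}$, this is visibly a cube. When $d=0$ we have $j_0=0$, which is a cube.

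\textbf{Case $a$ not a cube in $\FF_{p^2}$.} Then $d\neq 0$, because $d=0$ forces $a=1$. By Theorem \ref{thm:whether_codomain_is_over_Fp2}, none of the three type~1 codomains has $j$-invariant in $\FF_{p^2}$, so $j_0$ has outdegree exactly $1$.

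Theorem \ref{thm:whether_codomain_is_over_Fp2} then also shows that $j_0$ is ordinary. Indeed, a type~1 codomain is isogenous to $H(a,d)$, so if $H(a,d)$ were supersingular that codomain would be supersingular with $j$-invariant in $\FF_{p^2}$, a contradiction.

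An ordinary vertex with outdegree $1$ must lie on the floor. Every non-floor vertex has degree $4=\ell+1$, and a height-$0$ surface vertex has degree at least $1$; for the latter, degree $1$ is still a floor vertex since $V_0=V_h$.

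Finally, $j_0$ is not a cube. Write $j_0 = (d^3/a)\,u^3$ with $u=(d^3+216a)/(d^3-27a)\in\FF_{p^2}$; here $u\neq0$ because $j_0\ne0$, which holds since $j=0$ would make $a$ a cube when $d\ne0$ and $d^3+216a=0$ fails to hold. So $j_0$ is a cube if and only if $a$ is a cube, and $a$ is not a cube here.

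Edge case: if $d^3+216a=0$, then $a=-(d/6)^3$ is a cube, so this cannot happen in this case. When $p\equiv2 \pmod 3$ every element of $\FF_p$ is a cube, but $\FF_{p^2}$ has index-$3$ cubes since $3\mid p^2-1$, so the cube criterion is meaningful.

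\textbf{Exclusivity and the main obstacle.} The three cases are mutually exclusive by the ordinary/supersingular dichotomy and by the outdegree values $4$ versus $1$.

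The main obstacle is the degree bookkeeping at $j_0\in\{0,1728\}$ and for height-$0$ components, where multiplicities of edges and the undirected-edge conventions differ. I would handle these via Proposition \ref{prop:volcano_graph_0_1728}, checking that outdegree (counting kernels) is $4$ exactly at non-floor vertices there as well.
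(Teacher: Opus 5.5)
Your proof is correct. It uses the same ingredients as the paper: Eq.~\eqref{eq:j-invariant_Hesse}, rationality of the type~2 isogeny and of all three type~1 isogenies when $a$ is a cube, Theorem~\ref{thm:whether_codomain_is_over_Fp2}, and the degree pattern of volcanoes. The difference is the case split. You split on whether $a$ is a cube. The paper splits on the graph position of $j_0$ (supersingular, ordinary off the floor, ordinary on the floor) and derives the cube condition in each case.

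The paper's layout matches the three bullets one-to-one, so ``precisely one'' is immediate. However, it has to invoke Theorem~\ref{thm:whether_codomain_is_over_Fp2} in all three cases. In your layout the cube branch is elementary, since outdegree $4$ alone rules out the floor. The theorem is used only once, in the non-cube branch, where it gives both ordinarity and outdegree exactly $1$ at the same time.

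You are also more careful than the paper about the equivalence ``$j_0$ is a cube iff $a$ is''. You check that $(d^3+216a)/(d^3-27a)\neq 0$ in the non-cube branch, which matters because $0$ is a cube. The detour through $j_0\neq 0$ in that sentence is unnecessary, since your edge-case remark ($a=-(d/6)^3$) already gives it.

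The bookkeeping at $\{0,1728\}$ that you defer requires no more than the paper itself relies on. Proposition~\ref{prop:volcano_graph_0_1728} keeps degree $\ell+1=4$ at non-floor vertices, and in your non-cube branch $j_0\neq 0$ anyway.
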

\begin{proof}
First of all, by Eq.\eqref{eq:j-invariant_Hesse} and as $a,d \in \FF_{p^2}$, $j_0 = j(H(a,d))$ is a cube in $\FF_{p^2}$ if and only if $a$ is a cube in $\FF_{p^2}$.
Hence for each case in the statement, the claim for $j_0$ follows from the claim for $a$.
We also note that the vertex $j_0$ of $G_3(\FF_{p^2})$ always has an edge corresponding to the $3$-isogeny of type $2$ from $H(a,d)$.

First, suppose that $j_0$ is supersingular.
When $d = 0$, we have $a = 1$ by the assumption, which is a cube in $\FF_{p^2}$, as desired.
On the other hand, suppose that $d \neq 0$.
Consider a $3$-isogeny $H(a,d) \to H(A,D)$ of type $1$ associated with $c \in \overline{\FF_p}$ satisfying $c^3 = a$.
Now $H(A,D)$ is also supersingular and hence $j(H(A,D)) \in \FF_{p^2}$.
Therefore $c \in \FF_{p^2}$ by Theorem \ref{thm:whether_codomain_is_over_Fp2}.
This implies that $a = c^3$ is a cube in $\FF_{p^2}$, as desired.
Hence the claim holds in this case.

Secondly, suppose that $j_0$ is ordinary and does not lie on the floor $V_h$ of $V$.
Then $j_0$ has outdegree $4$ in $V$ by the shape of $V$ described in Propositions \ref{prop:volcano_graph} and \ref{prop:volcano_graph_0_1728}.
Now when $d = 0$, we have $a = 1$ by the assumption, which is a cube in $\FF_{p^2}$, as desired.
On the other hand, suppose that $d \neq 0$.
As $j_0$ has outdegree $4$ in $V$ as explained above, for a $3$-isogeny $H(a,d) \to H(A,D)$ of type $1$ associated with $c \in \overline{\FF_p}$ satisfying $c^3 = a$, $j(H(A,D))$ must be in $V$, hence $j(H(A,D)) \in \FF_{p^2}$.
Therefore we have $c \in \FF_{p^2}$ by Theorem \ref{thm:whether_codomain_is_over_Fp2}.
This implies that $a = c^3$ is a cube in $\FF_{p^2}$, as desired.
Hence the claim holds in this case.

Finally, suppose that $j_0$ is ordinary and lies on the floor $V_h$ of $V$.
Assume, for the sake of contradiction, that $a$ is a cube in $\FF_{p^2}$.
Then, as $\FF_{p^2}$ involves a primitive third root of unity, $a$ has three cubic roots in $\FF_{p^2}$, say $c_1,c_2,c_3$.
Then for each $3$-isogeny $H(a,d) \to H(A_j,D_j)$ of type $1$ associated with $c_j$, we have $A_j,D_j \in \FF_{p^2}$ and hence $j(H(A_j,D_j)) \in \FF_{p^2}$.
This implies that $j_0$ has outdegree $4$ in $V$, but this is impossible as now $j_0 \in V_h$ (we note that even if $h = 0$, $j_0$ has outdegree at most $2$ by Propositions \ref{prop:volcano_graph} and \ref{prop:volcano_graph_0_1728}).
Hence $a$ is not a cube in $\FF_{p^2}$.
In particular, we have $d \neq 0$ (if $d = 0$, then $a = 1$ is a cube in $\FF_{p^2}$ by the assumption).
Moreover, if $j_0$ has outdegree at least $2$ in $V$, then there must be a $3$-isogeny $H(a,d) \to H(A,D)$ of type $1$ associated with some $c \in \overline{\FF_p}$ (where $c^3 = a$) satisfying $j(H(A,D)) \in \FF_{p^2}$.
Then we have $c \in \FF_{p^2}$ by Theorem \ref{thm:whether_codomain_is_over_Fp2}, therefore $a = c^3$ is a cube in $\FF_{p^2}$, a contradiction.
Hence $j_0$ has outdegree $1$ in $V$, therefore the claim holds in this case.
This completes the proof.
\end{proof}

For the next two corollaries, we introduce the following terminology.

\begin{definition}
[Twisted Hessian $j$-invariant]
\label{defn:twisted_Hessian_j-invariant}
We say that a $j$-invariant $j_0$ in $\FF_{p^2}$ is \emph{twisted Hessian} if there exists a twisted Hessian curve $H(a,d)$ with $a,d \in \FF_{p^2}$ satisfying $j(H(a,d)) = j_0$.
\end{definition}

We note that any supersingular $j$-invariant in $\FF_{p^2}$ is twisted Hessian by Proposition \ref{prop:from_Weierstrass_to_Hesse}.
Then we have the following corollaries of Theorem \ref{thm:whether_codomain_is_over_Fp2}.

\begin{corollary}
\label{cor:twisted_Hessian_component}
For any ordinary component $V$ of $G_3(\FF_{p^2})$, if $V$ involves a twisted Hessian $j$-invariant, then every vertex of $V$ is twisted Hessian.
\end{corollary}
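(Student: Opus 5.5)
The plan is to show that twisted Hessianness propagates along edges of $G_3(\FF_{p^2})$, and then to use connectedness of $V$. Concretely, I will prove: if $j_0 \in V$ is twisted Hessian and $j_0 \to j_1$ is an edge of $G_3(\FF_{p^2})$, then $j_1$ is twisted Hessian. Every edge $j_1 \to j_0$ yields an edge $j_0 \to j_1$ via the dual isogeny, which is again a $3$-isogeny defined over $\FF_{p^2}$. So any vertex of $V$ is reachable from a given twisted Hessian vertex by a directed path, and induction on the path length finishes the proof. This also covers the components containing $0$ or $1728$ in Proposition \ref{prop:volcano_graph_0_1728}, since only the existence of edges in both directions is used.

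For the propagation step, fix $H(a,d)$ with $a,d \in \FF_{p^2}$ and $j(H(a,d)) = j_0$. If $d = 0$, then $j_0 = 0$, and I replace the curve by $H(1,0)$, which has the same $j$-invariant; so I may assume that $a = 1$ whenever $d = 0$. The edge $j_0 \to j_1$ comes from a $3$-isogeny $E \to E'$ with $j(E) = j_0$. Composing with an isomorphism $H(a,d) \simeq E$ over $\overline{\FF_p}$, its kernel corresponds to one of the four order-$3$ subgroups of $H(a,d)[3]$. These four subgroups are exactly the kernels of the type $2$ isogeny of Proposition \ref{prop:3-isogeny_Hesse_type2} and of the three type $1$ isogenies of Proposition \ref{prop:3-isogeny_Hesse_type1}, one for each cube root $c$ of $a$: they contain $(0:-\omega:1)$ and $(1:-c:0)$ respectively, and these subgroups are pairwise distinct. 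Since the codomain is determined up to isomorphism by the kernel, $j_1$ equals the $j$-invariant of the codomain of one of these four explicit isogenies.

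Now I split by type. For type $2$, the codomain is $H(d^3 - 27a, 3d)$, whose coefficients lie in $\FF_{p^2}$ (and $\omega \in \FF_{p^2}$ always), so $j_1$ is twisted Hessian. For type $1$ with cube root $c$, we have $j_1 = j(H(A,D)) \in \FF_{p^2}$. If $d \neq 0$, Theorem \ref{thm:whether_codomain_is_over_Fp2} gives $c \in \FF_{p^2}$. If $d = 0$, then $a = 1$ and $c$ is a cube root of unity, which lies in $\FF_{p^2}$. Either way $A = d^2c + 3dc^2 + 9a$ and $D = d + 6c$ lie in $\FF_{p^2}$. The nondegeneracy condition $A(27A - D^3) \neq 0$ holds automatically because $H(A,D)$ is the codomain of an isogeny of elliptic curves. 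So $j_1$ is twisted Hessian.

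The only delicate point is the identification in the second paragraph: that every $3$-isogeny out of a curve with $j$-invariant $j_0$ has the same codomain $j$-invariant as one of the four explicit isogenies, even though the edge may be realized on a different $\FF_{p^2}$-model, possibly a twist, of $j_0$. I would handle this by passing to $\overline{\FF_p}$. There isomorphic curves have their $3$-isogenies in bijection via the isomorphism, and $j$-invariants of codomains are preserved. The rest of the argument is routine bookkeeping.
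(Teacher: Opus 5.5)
Your proposal is correct and follows the paper's proof: you propagate twisted-Hessianness along edges using connectedness, realize each edge as one of the four explicit $3$-isogenies from $H(a,d)$ (normalized to $a=1$ when $d=0$), treat type $2$ directly, and treat type $1$ via Theorem~\ref{thm:whether_codomain_is_over_Fp2} when $d\neq 0$ or via cube roots of unity when $d=0$. Your additional checks (dual isogenies for edge direction, passing to $\overline{\FF_p}$ to handle twists, distinctness of the four kernels, and nondegeneracy of $H(A,D)$) make explicit what the paper leaves implicit; they do not change the argument.
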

\begin{proof}
As $V$ is connected by definition, it suffices to show that if two vertices $j_0,j_1 \in V$ are adjacent and $j_0$ is twisted Hessian, then $j_1$ is also twisted Hessian.
By the assumption, there is a twisted Hessian curve $H(a,d)$ with $a,d \in \FF_{p^2}$ satisfying $j(H(a,d)) = j_0$, and there is an elliptic curve $E$ satisfying that $j(E) = j_1$ and $E$ is $3$-isogenous to $H(a,d)$.
Note that when $d = 0$, $a$ can be chosen as $a = 1$.
Now there is a $3$-isogeny $\phi \colon H(a,d) \to H(A,D)$ of type $1$ or type $2$ satisfying $H(A,D) \simeq E$, hence $j(H(A,D)) = j(E) = j_1$.
If $\phi$ is of type $2$, then $A$ and $D$ lie in $\FF_{p^2}$ as well as $a$ and $d$ by definition, therefore $j_1$ is twisted Hessian, as desired.
On the other hand, suppose that $\phi$ is of type $1$ associated with $c \in \overline{\FF_p}$.
If $d = 0$ (hence $a = 1$), then the cubic root $c$ of $a = 1$ lies in $\FF_{p^2}$.
On the other hand, if $d \neq 0$, then the fact $j(H(A,D)) = j_1 \in \FF_{p^2}$ and Theorem \ref{thm:whether_codomain_is_over_Fp2} imply that $c \in \FF_{p^2}$.
Hence we have $c \in \FF_{p^2}$ in any case, therefore $A$ and $D$ lie in $\FF_{p^2}$ by definition and $j_1$ is twisted Hessian, as desired.
This completes the proof.
\end{proof}

\begin{corollary}
\label{cor:no_ordinary_isolated_cycle}
For any ordinary component $V$ of $G_3(\FF_{p^2})$ involving a twisted Hessian $j$-invariant, if the height $h$ of $V$ is $0$, then $V$ has precisely one edge (which is either a self-loop or a non-loop edge).
\end{corollary}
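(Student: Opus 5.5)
The plan is to combine Corollary \ref{cor:twisted_Hessian_component}, Corollary \ref{cor:when_j_invariant_is_a_cube} and Proposition \ref{prop:backtracking}: every vertex of $V$ should have outdegree exactly $1$, and the unique outgoing edges should pair up into a single edge.

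First, by Corollary \ref{cor:twisted_Hessian_component}, every vertex $j_0$ of $V$ is twisted Hessian. So for each $j_0 \in V$ we can pick $H(a,d)$ with $a,d \in \FF_{p^2}$ and $j(H(a,d)) = j_0$. When $d = 0$ we replace it by $H(1,0)$, using $j(H(a,0)) = 0$, so that the hypothesis of Corollary \ref{cor:when_j_invariant_is_a_cube} holds. Since $h = 0$, we have $V = V_0 = V_h$, so every vertex lies on the floor. The third case of Corollary \ref{cor:when_j_invariant_is_a_cube} then says every vertex of $V$ has outdegree exactly $1$ in $V$. That single outgoing edge must be the one given by the type-$2$ isogeny from $H(a,d)$, since that isogeny always lands in $\FF_{p^2}$.

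Next, fix $j_0 \in V$ and let $j_1$ be the target of its unique outgoing edge, realized by the type-$2$ isogeny $H(a,d) \to H(A,D)$ with $A,D \in \FF_{p^2}$. By the same reasoning, the unique outgoing edge of $j_1$ is the type-$2$ isogeny from $H(A,D)$. By Proposition \ref{prop:backtracking} its codomain is isomorphic to $H(a,d)$, so that edge goes back to $j_0$. Thus the out-edges of $j_0$ and $j_1$ are mutual reverses and together form one undirected edge: a self-loop if $j_0 = j_1$, and a non-loop edge otherwise.

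It remains to show $V = \{j_0, j_1\}$. This is the step needing care, because of the bookkeeping between directed and undirected edges, especially at $0$ and $1728$. Suppose some vertex $j_2 \notin \{j_0,j_1\}$ were adjacent to $j_0$ or $j_1$.
\begin{itemize}
\item If $V$ avoids $0$ and $1728$, edges have symmetric multiplicities, as recalled before Definition \ref{defn:volcano}. An edge between $j_2$ and $j_0$ (say) would then give an outgoing edge from $j_0$ to $j_2 \neq j_1$. This contradicts outdegree $1$.
\item If $V$ contains $0$ or $1728$, Proposition \ref{prop:volcano_graph_0_1728} with $h = 0$ says $V$ is, viewed undirected, an ordinary volcano of height $0$. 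In that case each vertex's undirected degree equals its outdegree. So $V$ is a connected graph in which every vertex has degree $1$ (a self-loop contributing one outgoing edge).
\end{itemize}
In both cases, connectedness forces $V$ to consist of $j_0, j_1$ and the single edge between them (or the self-loop when $j_0 = j_1$). The main obstacle I expect is making this degree-versus-outdegree accounting precise in the exceptional $0/1728$ case.
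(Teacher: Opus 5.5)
Your proof is correct and follows essentially the paper's route. Corollary~\ref{cor:twisted_Hessian_component} makes every vertex twisted Hessian, Corollary~\ref{cor:when_j_invariant_is_a_cube} then forces every vertex of the height-$0$ component to have degree $1$, and connectedness leaves a single edge. The paper reaches degree $1$ as ``degree $1$ or $4$, but at most $2$ on the surface'' rather than ``every vertex is on the floor''. It also omits your pairing step via Proposition~\ref{prop:backtracking} and the $0/1728$ case split, which only make explicit the undirected-edge bookkeeping it leaves implicit.
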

\begin{proof}
By the assumption and Corollary \ref{cor:twisted_Hessian_component}, every vertex of $V$ is twisted Hessian.
As $h = 0$, now $V = V_0$ is a regular graph of degree at most $2$ as in Propositions \ref{prop:volcano_graph} and \ref{prop:volcano_graph_0_1728}.
On the other hand, by Corollary \ref{cor:when_j_invariant_is_a_cube}, each vertex of $V$ has degree $1$ or $4$.
This implies that $V$ is a regular graph of degree $1$, which has precisely one edge, as desired.
This completes the proof.
\end{proof}

As a byproduct, we obtain another proof of the following property of supersingular $j$-invariants originally proved in \cite[Theorem 1.2]{morton2011cubic}.

\begin{corollary}
\label{cor:SS_j_invariant_is_cubic}
Here we temporarily remove the assumption $p \geq 5$.
Any supersingular $j$-invariant in $\FF_{p^2}$ is a cube in $\FF_{p^2}$.
\end{corollary}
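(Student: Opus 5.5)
The plan is to split by the characteristic. The case $p \geq 5$ should follow almost immediately from Proposition \ref{prop:from_Weierstrass_to_Hesse} combined with Corollary \ref{cor:when_j_invariant_is_a_cube}. The small primes $p = 2, 3$ will then be handled directly, since there is only one supersingular $j$-invariant in each case.

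For $p \geq 5$, let $j_0 \in \FF_{p^2}$ be supersingular.

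1. By Proposition \ref{prop:from_Weierstrass_to_Hesse}, there is a twisted Hessian curve $H(a,d)$ with $a,d \in \FF_{p^2}$ and $j(H(a,d)) = j_0$.
2. To apply Corollary \ref{cor:when_j_invariant_is_a_cube}, we need $a = 1$ whenever $d = 0$. If $d = 0$, then $j_0 = 0$. In that case we replace $H(a,0)$ by $H(1,0)$: this curve is defined over $\FF_p$, is nonsingular because $27 \neq 0$ for $p \geq 5$, and has $j$-invariant $0$ by Eq.\eqref{eq:j-invariant_Hesse}.
3. Corollary \ref{cor:when_j_invariant_is_a_cube} then places us in exactly one of its three cases. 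Since $j_0$ is supersingular, it must be the first case, which says directly that $j_0$ is a cube in $\FF_{p^2}$.

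Alternatively, for $j_0 = 0$ the conclusion is trivial, since $0 = 0^3$.

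For $p \in \{2,3\}$, I use the classical fact (Silverman, Example V.4.4 / Exercise V.5.7) that the unique supersingular $j$-invariant in characteristic $2$ or $3$ is $j = 0 = 1728$. Since $0 = 0^3$ is a cube, the claim holds trivially. This is precisely why the assumption $p \geq 5$ can be dropped here.

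The argument has essentially no technical obstacle; all the real work is contained in Theorem \ref{thm:whether_codomain_is_over_Fp2}. The only points that need care are:

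1. ensuring the hypothesis ``$a = 1$ when $d = 0$'' of Corollary \ref{cor:when_j_invariant_is_a_cube} can be arranged, which is the replacement by $H(1,0)$ in step 2;
2. correctly citing the uniqueness of the supersingular $j$-invariant when $p \leq 3$, a case the paper's standing setup excludes and which must therefore be handled outside it.
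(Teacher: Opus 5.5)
Your proposal is correct and follows the paper's proof. For $p \geq 5$ you realize $j_0$ by a twisted Hessian curve via Proposition~\ref{prop:from_Weierstrass_to_Hesse} and read off the first case of Corollary~\ref{cor:when_j_invariant_is_a_cube}; for $p \in \{2,3\}$ you use that $0$ is the only supersingular $j$-invariant. Your explicit replacement by $H(1,0)$ when $d = 0$ is a detail the paper leaves implicit, and it is exactly what is needed to meet that corollary's hypothesis.
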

\begin{proof}
When $p \geq 5$, any supersingular $j$-invariant in $\FF_{p^2}$ is twisted Hessian as mentioned above, therefore the claim is a part of Corollary \ref{cor:when_j_invariant_is_a_cube}.
On the other hand, when $p \in \{2,3\}$, $0$ is the only supersingular $j$-invariant, therefore the claim holds obviously.
\end{proof}

\subsection{The Case over $\FF_p$ with $p \equiv 2 \pmod{3}$}
\label{sec:3-isogeny_walk:2_mod_3}

From now, we consider the case $p \equiv 2 \pmod{3}$ and focus on twisted Hessian curves $H(a,d)$ with $a,d \in \FF_p$.
For this $p$, the map $\FF_p \to \FF_p$, $x \mapsto x^3$ is a bijection.
Therefore each element $x$ of $\FF_p$ has a unique cubic root in $\FF_p$, denoted here by $\sqrt[3]{x}$.
Now $H(a,d)$ is isomorphic to a Hessian curve $H(1,d/\sqrt[3]{a})$ via a change of coordinates $(X',Y',Z') = (\sqrt[3]{a}X,Y,Z)$; based on this fact, we focus on Hessian curves $H(1,d)$ with $d \in \FF_p$ from now.

For a Hessian curve $H(1,d)$ with $d \in \FF_p$, we consider a $3$-isogeny $\phi \colon H(1,d) \to H(A,D)$ of type $1$ associated with $c := 1 = \sqrt[3]{1}$, where $A = d^2 + 3d + 9$ and $D = d + 6$.
Then we call the composition $\phi' \colon H(1,d) \to H(A,D) \to H(1,D')$ of $\phi$ followed by the change of coordinates $(X',Y',Z') = (\sqrt[3]{A}X,Y,Z)$ the $3$-isogeny of \emph{type $1'$}.
Then we have $D' = (d + 6) / \sqrt[3]{d^2 + 3d + 9}$.
Moreover, as the latter change of coordinates maps the point $(0:-\omega:1)$ of $H(A,D)$ to the point $(0:-\omega:1)$ of $H(1,D')$ where $\omega$ is a primitive third root of unity, the same proof as Proposition \ref{prop:backtracking} implies the following property.

\begin{lemma}
\label{lem:backtracking_type1'}
Assume that $p \equiv 2 \pmod{3}$.
Let $\phi \colon H(1,d) \to H(1,D)$ be the $3$-isogeny of type $1'$ as above from a Hessian curve $H(1,d)$ with $d \in \FF_p$.
Then the $3$-isogeny $\psi \colon H(1,D) \to H(a',d')$ of type $2$ from the Hessian curve $H(1,D)$ (as in Proposition \ref{prop:3-isogeny_Hesse_type2}) satisfies $H(a',d') \simeq H(1,d)$.
\end{lemma}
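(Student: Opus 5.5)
The plan is to copy the proof of Proposition \ref{prop:backtracking} for type $1$, adding one check: the change of coordinates does not move the relevant $3$-torsion point. Write $\phi = \iota \circ \phi_1$. Here $\phi_1 \colon H(1,d) \to H(A,D_1)$ is the type $1$ isogeny associated with $c = 1$, with $A = d^2+3d+9$ and $D_1 = d+6$. The map $\iota \colon H(A,D_1) \to H(1,D)$ is the isomorphism $(X:Y:Z) \mapsto (\sqrt[3]{A}X : Y : Z)$, so that $D = D_1/\sqrt[3]{A}$. Fix $\omega \in \FF_{p^2}$ with $\omega^2+\omega+1 = 0$. Since $p \equiv 2 \pmod 3$, $\omega \notin \FF_p$. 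The type $2$ isogeny $\psi$ from $H(1,D)$ is still defined over $\FF_{p^2}$, and by Proposition \ref{prop:3-isogeny_Hesse_type2} its kernel is $\{(0:-1:1),(0:-\omega:1),(0:-\omega^2:1)\}$.

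The first step is to compute $\phi(0:-\omega:1)$. The computation in the proof of Proposition \ref{prop:backtracking} with $c = 1$ gives $\phi_1(0:-\omega:1) = (0:-\omega^2:1)$. Since $\iota$ only rescales the $X$-coordinate, it fixes points with $X = 0$, so $\phi(0:-\omega:1) = (0:-\omega^2:1) \in \ker\psi$. This is exactly the remark made just before the lemma.

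Next, $\psi \circ \phi$ kills $\ker\phi = \iota$-twisted version of $\ker \phi_1$, namely $\ker\phi_1 = \{(0:-1:1),(1:-1:0),(1:0:-1)\}$, since $\iota$ is an isomorphism. It also kills $(0:-\omega:1)$. The points $(1:-1:0)$ and $(0:-\omega:1)$ have order $3$ and generate different subgroups of order $3$, because $(0:-\omega:1)$ is not in $\ker\phi_1$. Hence they generate $H(1,d)[3] \simeq \ZZ/3\ZZ \oplus \ZZ/3\ZZ$. So $\ker(\psi\circ\phi) \supseteq H(1,d)[3]$.

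The composite $\psi\circ\phi$ has degree $9$ and is separable, because $p \neq 3$. Therefore $\ker(\psi\circ\phi) = H(1,d)[3] = \ker[3]$. Two separable isogenies with the same kernel differ by an isomorphism of the codomain, so $\psi\circ\phi = \lambda\circ[3]$ for some isomorphism $\lambda \colon H(1,d) \to H(a',d')$. This gives $H(a',d') \simeq H(1,d)$.

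No step is a real obstacle. The only point needing care is that $\iota$ fixes $(0:-\omega:1)$. This is immediate, because the coordinate change acts only on $X$.
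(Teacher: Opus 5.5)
Your proposal is correct and follows the paper's argument. The paper also just reruns the proof of Proposition \ref{prop:backtracking}, noting that the coordinate change $(X:Y:Z)\mapsto(\sqrt[3]{A}X:Y:Z)$ fixes the points with $X=0$, so that $\phi(0:-\omega:1)=(0:-\omega^2:1)\in\ker\psi$ and hence $\ker(\psi\circ\phi)=H(1,d)[3]$. Your remarks on separability and on writing $\psi\circ\phi=\lambda\circ[3]$ only spell out steps the paper leaves implicit.
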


We define $\mathcal{P} := \{ d \in \FF_p \mid 27 - d^3 \neq 0 \}$.
The next lemma shows that the correspondence $d \mapsto D$ of parameters for Hessian curves over $\FF_p$ induced by the $3$-isogeny of type $1'$ is injective.

\begin{lemma}
\label{lem:type1'_parameter_injective}
Assume that $p \equiv 2 \pmod{3}$.
Let $d_1,d_2 \in \mathcal{P}$.
If $D = (d_1 + 6) / \sqrt[3]{d_1{}^2 + 3d_1 + 9} = (d_2 + 6) / \sqrt[3]{d_2{}^2 + 3d_2 + 9}$, then $D \in \mathcal{P}$ and $d_1 = d_2$.
\end{lemma}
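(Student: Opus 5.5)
Cubing the common value gives
\[
D^3 = \frac{(d+6)^3}{d^2+3d+9}, \qquad d \in \{d_1,d_2\}.
\]
Since $x \mapsto x^3$ is a bijection on $\FF_p$ for $p \equiv 2 \pmod{3}$, the map $d \mapsto D$ is injective if and only if the rational function $g(d) := (d+6)^3/(d^2+3d+9)$ is injective on $\mathcal{P}$. First we check that $d^2+3d+9 \neq 0$ for $d \in \FF_p$. Indeed, $d^2+3d+9 = (d^3-27)/(d-3)$ when $d \neq 3$, and $d^3 \neq 27$ on $\mathcal{P}$. For $d = 3$ we get $27 - 27 = 0$, so $3 \notin \mathcal{P}$ anyway. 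Thus $D$ is well defined.

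Next we compute $27 - D^3$:
\[
27 - D^3 = \frac{27(d^2+3d+9) - (d+6)^3}{d^2+3d+9} = \frac{-(d-3)^3}{d^2+3d+9}.
\]
The numerator identity expands as $27d^2+81d+243 - d^3 - 18d^2 - 108d - 216 = -(d^3 - 9d^2 + 27d - 27)$, which matches the identity $(d_0+2c)^3 - A = (d_0-c)^3$ used in Theorem~\ref{thm:whether_codomain_is_over_Fp2}. Since $d \neq 3$, we get $27 - D^3 \neq 0$, and so $D \in \mathcal{P}$.

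For injectivity we use the same formula. Suppose $D^3 = g(d_1) = g(d_2)$. Then
\[
\frac{(d_1-3)^3}{d_1^2+3d_1+9} = \frac{(d_2-3)^3}{d_2^2+3d_2+9}.
\]
Multiplying numerator and denominator by $d_i - 3$ turns each side into $(d_i-3)^4/(d_i^3-27)$. Setting $u_i := d_i - 3 \neq 0$, we have $d_i^3 - 27 = u_i(u_i^2 + 9u_i + 27)$, so the condition becomes
\[
\frac{u_1^3}{u_1^2+9u_1+27} = \frac{u_2^3}{u_2^2+9u_2+27}.
\]
Taking reciprocals and writing $v_i := 1/u_i$, this reads $v_1 + 9v_1^2 + 27v_1^3 = v_2 + 9v_2^2 + 27v_2^3$. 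Now
\[
27v^3 + 9v^2 + v = \frac{(3v+1)^3 - 1}{3}.
\]
Hence $(3v_1+1)^3 = (3v_2+1)^3$. Cubing is injective on $\FF_p$, so $v_1 = v_2$ (we can divide by $3$ because $p \neq 3$), which gives $d_1 = d_2$.

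The main obstacle is spotting this substitution, i.e.\ recognizing the cubic in $1/(d-3)$ as a shifted cube. The identity above suggests looking at $(d-3)^3$, and the remaining steps are routine algebra. If the substitution had failed, the fallback would be to factor $g(d_1) - g(d_2)$ as $(d_1 - d_2)$ times a symmetric quadratic-type factor and show that this factor has no $\FF_p$-solutions using the fact that $-3$ is a non-square modulo $p$.
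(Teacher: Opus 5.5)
Your overall route is sound, but the key identity you display is false. Expanding gives
\[
\frac{(3v+1)^3-1}{3}=9v^3+9v^2+3v\neq 27v^3+9v^2+v .
\]
At $v=1$ you get $21$ versus $37$. So ``Hence $(3v_1+1)^3=(3v_2+1)^3$'' does not follow from the preceding line. The correct completion of the cube is
\[
27v^3+9v^2+v=\frac{(9v+1)^3-1}{27},
\]
which gives $(9v_1+1)^3=(9v_2+1)^3$. Then $9v_1+1=9v_2+1$ by injectivity of cubing on $\FF_p$, and $v_1=v_2$ since $p\neq 3$. With this repair the rest of your argument checks out: the computation $27-D^3=-(d-3)^3/(d^2+3d+9)$, the substitution $u_i=d_i-3$, and the passage to reciprocals, which is legitimate because $u_i\neq 0$.

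Once corrected, your proof is essentially the paper's in different clothing. Note that $9v+1=(d+6)/(d-3)=-D/\sqrt[3]{27-D^3}$. The paper uses the same identity for $27-D^3$, but takes its cube root in $\FF_p$ directly. It observes that the ratio $D/\sqrt[3]{27-D^3}=(d+6)/(3-d)$ is a M\"obius function of $d$, and inverts it explicitly as $d=3\bigl(D-2\sqrt[3]{27-D^3}\bigr)/\bigl(D+\sqrt[3]{27-D^3}\bigr)$. That route avoids the $u,v$ substitutions and the completing-the-cube step where your slip occurred. It also produces an explicit inverse map rather than bare injectivity.
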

\begin{proof}
First, for each $j \in \{1,2\}$, we have
\[
27 - D^3
= 27 - \frac{ d_j{}^3 + 18 d_j{}^2 + 108 d_j + 216 }{ d_j{}^2 + 3 d_j + 9 }
= \frac{ - d_j{}^3 + 9 d_j{}^2 - 27 d_j + 27 }{ d_j{}^2 + 3 d_j + 9 }
= \frac{ (3 - d_j)^3 }{ d_j{}^2 + 3 d_j + 9 } \enspace,
\]
therefore $27 - D^3 \neq 0$ (note that $d_j \neq 3$ as $d_j \in \mathcal{P}$), hence $D \in \mathcal{P}$.
Moreover, we have $\sqrt[3]{27 - D^3} = (3 - d_j) / \sqrt[3]{ d_j{}^2 + 3 d_j + 9 }$, therefore
\[
\frac{ 3 ( D - 2 \cdot \sqrt[3]{27 - D^3} ) }{ D + \sqrt[3]{27 - D^3} }
= \frac{ 3 ( (d_j + 6) - 2 (3 - d_j) ) / \sqrt[3]{ d_j{}^2 + 3 d_j + 9 } }{ ( (d_j + 6) + (3 - d_j) ) / \sqrt[3]{ d_j{}^2 + 3 d_j + 9 }}
= \frac{ 3 \cdot 3 d_j }{ 9 }
= d_j \enspace.
\]
As this holds for both $j \in \{1,2\}$, we have $d_1 = d_2$, as desired.
This completes the proof.
\end{proof}

Now we have the following result for the positions of Hessian curves defined over $\FF_p$ and $3$-isogenies from such curves in the $3$-isogeny graph.

\begin{theorem}
\label{thm:surface_2_mod_3}
Assume that $p \equiv 2 \pmod{3}$.
Let $H(1,d)$ be an ordinary Hessian curve with $d \in \FF_p$, and let $j_0 := j(H(1,d))$.
Let $V$ be the connected component of the $3$-isogeny graph $G_3(\FF_{p^2})$ containing $j_0$, as in Proposition \ref{prop:volcano_graph} or Proposition \ref{prop:volcano_graph_0_1728}.
Then:
\begin{enumerate}
\item \label{thm:surface_2_mod_3:item:1}
$j_0$ lies on the surface $V_0$ of $V$.
\item \label{thm:surface_2_mod_3:item:2}
Both of the $3$-isogeny of type $1'$ from $H(1,d)$ and the $3$-isogeny of type $2$ from $H(1,d)$ lie on the surface $V_0$ of $V$.
\end{enumerate}
\end{theorem}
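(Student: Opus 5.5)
The plan is to show that $j_0$ lies on a non-backtracking closed walk in $V$ made entirely of $3$-isogenies of types $1'$ and $2$ between Hessian curves over $\FF_p$. Such a closed walk cannot leave the surface of a volcano, and this will give both items at once.

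\textbf{Step 1 (a permutation of $\mathcal{P}$).} Define $f \colon \mathcal{P} \to \FF_p$ by $f(d) = (d+6)/\sqrt[3]{d^2+3d+9}$, the parameter change induced by the type $1'$ isogeny. Here $d^2+3d+9 = (27-d^3)/(3-d)$ when $d \neq 3$, so it is nonzero for $d \in \mathcal{P}$ and the cube root in $\FF_p$ exists because $p \equiv 2 \pmod{3}$. By Lemma \ref{lem:type1'_parameter_injective}, $f$ maps $\mathcal{P}$ into $\mathcal{P}$ and is injective. Since $\mathcal{P}$ is finite, $f$ is a bijection of $\mathcal{P}$. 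Note that $d \in \mathcal{P}$ is exactly the nonsingularity condition for $H(1,d)$, so our $d$ lies in $\mathcal{P}$.

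It follows that the $f$-orbit $d = d_0, d_1 = f(d_0), \dots, d_n = d_0$ is a finite cycle. Each $H(1,d_k)$ is isogenous to $H(1,d)$, hence ordinary, and its $j$-invariant lies in $\FF_p \subset \FF_{p^2}$. So every $j(H(1,d_k))$ lies in $V$, and every step $H(1,d_k) \to H(1,d_{k+1})$ is a type $1'$ edge of $V$.

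\textbf{Step 2 (the cycle is non-backtracking).} By Lemma \ref{lem:backtracking_type1'}, the type $2$ isogeny from $H(1,d_{k+1})$ lands on a curve isomorphic to $H(1,d_k)$. Its kernel is the image of the order-$3$ subgroup $\{(0:-\omega^i:1)\}$, which is exactly the kernel of the dual of $H(1,d_k)\to H(1,d_{k+1})$, so the type $2$ isogeny from $H(1,d_{k+1})$ is this dual. At each vertex $H(1,d_k)$, the walk therefore arrives along the edge whose kernel is the type $2$ kernel $\{(0:-1:1),(0:-\omega:1),(0:-\omega^2:1)\}$. It leaves along the type $1'$ edge, with kernel $\{(0:-1:1),(1:-1:0),(1:0:-1)\}$. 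These two kernels are distinct subgroups, so the closed walk never backtracks when edges are identified with kernels. I also need $j_0 \neq 0$. This holds because $0$ is supersingular for $p \equiv 2 \pmod{3}$, so $V$ is a genuine volcano or the $1728$ variant of Proposition \ref{prop:volcano_graph_0_1728}; the $1728$ case needs a short separate look at the multiplicities of its special edges.

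\textbf{Step 3 (closed walks stay on the surface).} Let $m$ be the maximum level $i$ (with $j \in V_i$) among vertices of the closed walk, and suppose $m \geq 1$. At a vertex of level $m$, the walk must arrive from the parent level or from the same level. By Definition \ref{defn:volcano}, a vertex in $V_m$ with $m \geq 1$ has exactly one edge to $V_{m-1}$ and no edges within its own level. So the walk arrives and departs along the unique upward edge, which is backtracking and contradicts Step 2. Hence $m = 0$, and the whole cycle, including $j_0$, lies on $V_0$. This gives item~1. The type $1'$ edge $d \to d_1$ and the type $2$ edge from $H(1,d)$ (the reverse of $d_{n-1} \to d$) are both edges of the cycle, so both lie on the surface, which gives item~2.

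\textbf{Main obstacle.} The delicate point is Step 3 in degenerate situations. These are cycles of length $1$ or $2$ (self-loops or parallel edges), and the component containing $1728$, where edge multiplicities in $G_3(\FF_{p^2})$ are not symmetric. Here I must be careful that non-backtracking with respect to kernels really translates into not reusing the unique parent edge. I would handle this by working with kernels of isogenies from a fixed curve rather than with $j$-vertices. Concretely, a curve at level $m \geq 1$ has exactly one $3$-subgroup whose quotient is ascending. That subgroup is the kernel of the dual of any descending isogeny into the curve, so it is the kernel the walk arrived along. The departing kernel would have to equal it, which Step 2 rules out.
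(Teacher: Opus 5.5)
Your proposal is correct and follows essentially the same route as the paper: the type-$1'$ orbit of $d$ closes up into a cycle (injectivity from Lemma~\ref{lem:type1'_parameter_injective} plus finiteness of $\mathcal{P}$). At a cycle vertex of maximal level $m \geq 1$, the outgoing type-$1'$ edge and the type-$2$ edge, which is the dual of the incoming edge by Lemma~\ref{lem:backtracking_type1'}, would be two distinct ascending edges, and the volcano structure forbids this. Your kernel-level ``non-backtracking'' phrasing, together with the remark that $0$ is supersingular so $j_0 \neq 0$, simply makes explicit what the paper's ``two different ascending isogenies'' step uses implicitly.
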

\begin{proof}
Put $d_0 := d \in \mathcal{P}$, and consider a sequence $d_0 \to d_1 \to d_2 \to \cdots$ of elements $d_k$ of $\mathcal{P}$ given by $d_{k+1} = (d_k + 6) / \sqrt[3]{d_k{}^2 + 3 d_k + 9}$ (note that each $d_k$ is in fact an element of $\mathcal{P}$ by Lemma \ref{lem:type1'_parameter_injective}).
Now for each index $i \geq 0$, there is the $3$-isogeny $\phi_i$ of type $1'$ from $H(1,d_i)$ to $H(1,d_{i+1})$.
Hence each $j(H(1,d_i))$ belongs to the same ordinary component $V$ of $G_3(\FF_{p^2})$ as $j_0 = j(H(1,d))$. 
On the other hand, as $\mathcal{P}$ is a finite set, there is an index $k \geq 1$ satisfying that $d_k = d_{k'}$ for some $0 \leq k' \leq k - 1$.
Take the minimum value of such a $k$.
Now if $k' \geq 1$, then we have $d_{k-1} = d_{k'-1}$ by Lemma \ref{lem:type1'_parameter_injective}, contradicting the choice of $k$.
Hence we have $k' = 0$, i.e., $d_k = d_0$.

To prove that $j(H(1,d_i))$ lies on the surface $V_0$ of $V$ for every $i \geq 0$, assume, for the sake of contradiction, that this is not the case.
Then we can take an index $\nu \geq 1$ satisfying that $j(H(1,d_{\nu}))$ is not on the surface of $V$ and is one of the vertices closest to the floor of $V$ among the vertices $j(H(1,d_i))$ with $i \geq 0$ (recall that $d_k = d_0$, therefore such an index $\nu$ can be chosen from the range $\nu \geq 1$).
Now by the shape of $V$ as shown in Proposition \ref{prop:volcano_graph} and Proposition \ref{prop:volcano_graph_0_1728}, $\phi_{\nu-1} \colon H(1,d_{\nu-1}) \to H(1,d_{\nu})$ is a descending isogeny and $\phi_{\nu} \colon H(1,d_{\nu}) \to H(1,d_{\nu+1})$ is an ascending isogeny.
Therefore by Lemma \ref{lem:backtracking_type1'} applied to $\phi_{\nu-1}$, the $3$-isogeny $\psi$ of type $2$ from $H(1,d_{\nu})$ is also an ascending isogeny with codomain curve isomorphic to $H(1,d_{\nu-1})$.
Now $\phi_{\nu}$ and $\psi$ are different ascending isogenies from the same $H(1,d_{\nu})$, but this is impossible due to the shape of the graph $V$ as in Proposition \ref{prop:volcano_graph} and Proposition \ref{prop:volcano_graph_0_1728}.
Hence $j(H(1,d_i))$ lies on the surface $V_0$ for every $i \geq 0$.
In particular, $j(H(1,d)) = j(H(1,d_0)) = j_0$ lies on the surface $V_0$.
This proves Claim \ref{thm:surface_2_mod_3:item:1}.

Recall that $k \geq 1$ and $d_k = d_0 = d$.
Now the codomain curve $H(1,d_{k+1})$ of the $3$-isogeny $\phi_k$ of type $1'$ from $H(1,d_k) = H(1,d)$ lies on the surface $V_0$ as shown above.
On the other hand, Lemma \ref{lem:backtracking_type1'} applied to the $3$-isogeny $\phi_{k-1}$ of type $1'$ shows that the codomain curve of the $3$-isogeny of type $2$ from $H(1,d_k) = H(1,d)$ is isomorphic to $H(1,d_{k-1})$, which lies on the surface $V_0$ as shown above.
Hence, both of the $3$-isogeny of type $1'$ from $H(1,d)$ and the $3$-isogeny of type $2$ from $H(1,d)$ lie on the surface $V_0$.
This proves Claim \ref{thm:surface_2_mod_3:item:2}.
This completes the proof of Theorem \ref{thm:surface_2_mod_3}.
\end{proof}

By Theorem \ref{thm:surface_2_mod_3} and the shape of the ordinary components of $G_3(\FF_{p^2})$ as in Proposition \ref{prop:volcano_graph} and Proposition \ref{prop:volcano_graph_0_1728}, we have the following corollary.

\begin{corollary}
\label{cor:surface_2_mod_3}
Assume that $p \equiv 2 \pmod{3}$.
Let $H(1,d)$ with $d \in \FF_p$ be an ordinary Hessian curve.
Let $V$ be the connected component of the $3$-isogeny graph $G_3(\FF_{p^2})$ containing $j(H(1,d))$, as in Proposition \ref{prop:volcano_graph} or Proposition \ref{prop:volcano_graph_0_1728}.
Then $j(H(1,d))$ lies on the surface $V_0$ of $V$, and any $3$-isogeny of type $1$ from $H(1,d)$ associated with a primitive third root $c$ of unity (i.e., $c^3 = 1$ and $c \neq 1$) is a descending isogeny.
\end{corollary}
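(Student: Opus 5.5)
The first assertion, that $j(H(1,d))$ lies on the surface $V_0$, is Claim \ref{thm:surface_2_mod_3:item:1} of Theorem \ref{thm:surface_2_mod_3}. So the work is in the second assertion. The plan is to count the $3$-isogenies out of $H(1,d)$ and compare with the local shape of the volcano.

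Since $\omega \in \FF_{p^2}$ and $c^3 = 1$ has three roots $1,\omega,\omega^2$ in $\FF_{p^2}$, there are four $3$-isogenies from $H(1,d)$, all defined over $\FF_{p^2}$: the three of type $1$ associated with $c = 1, \omega, \omega^2$, and the one of type $2$. Their kernels are the four distinct order-$3$ subgroups of $H(1,d)[3]$ listed in Propositions \ref{prop:3-isogeny_Hesse_type1} and \ref{prop:3-isogeny_Hesse_type2}. Hence $j_0 := j(H(1,d))$ has outdegree $4$ in $V$, and these four edges are exactly all edges out of $j_0$. The type $1'$ isogeny is the type $1$ isogeny with $c=1$ followed by an isomorphism. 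By Claim \ref{thm:surface_2_mod_3:item:2} of Theorem \ref{thm:surface_2_mod_3}, the isogenies of type $1$ with $c=1$ and of type $2$ are two distinct edges (distinct kernels) from $j_0$ lying on the surface.

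Next we use the volcano structure from Proposition \ref{prop:volcano_graph}. A vertex of $V_0$ has at most $2$ outgoing edges within the surface, since the surface is regular of degree at most $2$ and edges are counted with multiplicity via kernels. All remaining edges from $j_0$ go to $V_1$ and are therefore descending. The two surface edges are already accounted for by the types $1'$ and $2$, so the two type $1$ isogenies with $c \in \{\omega,\omega^2\}$ must be descending. One also has to check that $h \geq 1$, so that $V_1$ is nonempty. This follows because a vertex with outdegree $4$ cannot occur in a height-$0$ volcano, whose degrees are at most $2$; alternatively Corollary \ref{cor:no_ordinary_isolated_cycle} applies.

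The main obstacle is the exceptional components of Proposition \ref{prop:volcano_graph_0_1728}, where $j_0 \in \{0,1728\}$ and edge multiplicities are asymmetric. If $j_0 = 1728$ (or $0$) is the whole surface, then the surface has a single vertex, and the outgoing edges from it fall into two kinds: self-loops, and edges to $V_1$ (these are the three incoming edges, up to multiplicity, seen from $V_1$). In this case I would argue that the surface edges from $j_0$ number at most $2$ as well, and that every non-surface edge goes to $V_1$ and is descending. So the same counting goes through. Moreover $j = 0$ is supersingular when $p \equiv 2 \pmod 3$, so the $0$ case does not arise at all, and only $1728$ needs this remark.
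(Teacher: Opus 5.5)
Your proposal is correct and follows the paper's own argument, which derives the corollary directly from Theorem~\ref{thm:surface_2_mod_3} together with the shape of the volcano. The type~$1'$ and type~$2$ isogenies are two horizontal edges with distinct kernels, and a surface vertex has at most two horizontal edges. Hence the remaining two type~$1$ isogenies, with $c \in \{\omega,\omega^2\} \subset \FF_{p^2}$ and therefore edges of $V$, must go down to $V_1$.

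Your extra checks on $h \geq 1$ and on the exceptional components are fine. In fact the $1728$ case cannot arise at all: by Proposition~\ref{prop:volcano_graph_0_1728}, all four edges out of $1728$ would then descend to $V_1$, contradicting Claim~\ref{thm:surface_2_mod_3:item:2} of Theorem~\ref{thm:surface_2_mod_3}.
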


\section{Supersingularity Testing for Hessian Curves}
\label{sec:SStest}

In \cite{DBLP:journals/lmsjcm/Sutherland12a}, based on the $2$-volcano structure of ordinary components of $2$-isogeny graphs $G_2(\FF_{p^2})$, Sutherland proposed a deterministic algorithm (given a quadratic non-residue and a cubic non-residue in $\FF_{p^2}$ as auxiliary inputs) with time complexity $O(n^3 \log^2 n)$, where $n = \log p$, to determine whether a given elliptic curve $E/\FF_{p^2}$ is supersingular.
With a similar idea (i.e., any three $3$-isogenies from a given ordinary elliptic curve involve at least one descending isogeny), based on the properties of $3$-isogenies from twisted Hessian curves described above (and also on Proposition \ref{prop:bound_for_height} for the bound for the heights), we obtain a deterministic supersingularity testing algorithm (given a cubic non-residue in $\FF_{p^2}$ as an auxiliary input) for twisted Hessian curves $H(a,d)$ with $a,d \in \FF_{p^2}$ as follows.
Now we recall that we have assumed that $p \geq 5$, and recall from Section \ref{sec:preliminaries:EC} that $0 \in \FF_{p^2}$ is ordinary when $p \equiv 1 \pmod{3}$ and is supersingular when $p \equiv 2 \pmod{3}$.

\paragraph{Algorithm 1: Supersingularity testing for twisted Hessian curves $H(a,d)$ with $a,d \in \FF_{p^2}$}

\begin{enumerate}
\item
When $d = 0$, output $\texttt{true}$ if $p \equiv 2 \pmod{3}$ and output $\texttt{false}$ otherwise.
\item
Set $h_0 \leftarrow \lfloor \log_3(4p) / 2 \rfloor$.
\item
If $a$ is not a cube in $\FF_{p^2}$, then output \texttt{false}.
Otherwise, compute a cubic root $c \in \FF_{p^2}$ of $a$; and for each $j \in \{0,1,2\}$, set $c_j \leftarrow c \omega^j$ where $\omega \in \FF_{p^2}$ is a primitive third root of unity, $a_j \leftarrow d^2 c_j + 3 d c_j{}^2 + 9 a$, and $d_j \leftarrow d + 6 c_j$.
Moreover, set $h \leftarrow 1$.
\item
While $h \leq h_0$, repeat the following:
\begin{enumerate}
\item
For each $j \in \{0,1,2\}$, do the following:
\begin{enumerate}
\item
When $d_j = 0$, output $\texttt{true}$ if $p \equiv 2 \pmod{3}$ and output $\texttt{false}$ otherwise.
\item
Set $a' \leftarrow a_j$ and $d' \leftarrow d_j$.
\item
If $a'$ is not a cube in $\FF_{p^2}$, then output \texttt{false}.
Otherwise, if $h < h_0$, then compute a cubic root $c \in \FF_{p^2}$ of $a'$, and set $a_j \leftarrow d'{}^2 c + 3 d' c^2 + 9 a'$ and $d_j \leftarrow d' + 6 c$.
\end{enumerate}
\item
Set $h \leftarrow h + 1$.
\end{enumerate}
\item
Output \texttt{true}.
\end{enumerate}
However, our computer experiment showed that Algorithm 1 for a general twisted Hessian curve $H(a,d)$ with $a,d \in \FF_{p^2}$ does not outperform an improved version of Sutherland's algorithm using $2^2$-isogenies for Legendre curves given in \cite{DBLP:conf/casc/HashimotoN21,DBLP:journals/ieiceta/HashimotoN23}.

On the other hand, when $p \equiv 2 \pmod{3}$ and the input curve $H(a,d)$ is defined over $\FF_p$ (i.e., $a,d \in \FF_p$) and is ordinary, a descending $3$-isogeny from $H(a,d) \simeq H(1,d / \sqrt[3]{a})$ is specified as in Corollary \ref{cor:surface_2_mod_3}.
By using this fact, the algorithm above can be improved for this special case as follows (note that a cubic root of $a \in \FF_p$ in this case can be computed as $a^{(2p-1)/3}$).

\paragraph{Algorithm 2: Supersingularity testing for twisted Hessian curves $H(a,d)$ with $a,d \in \FF_p$, when $p \equiv 2 \pmod{3}$}

\begin{enumerate}
\item
When $d = 0$, output \texttt{true}.
\item
Set $h_0 \leftarrow \lfloor \log_3(4p) / 2 \rfloor$.
\item
Compute $c \leftarrow a^{(2p-1)/3}$; set $d' \leftarrow d/c$; and set $a \leftarrow d'{}^2 \omega + 3 d' \omega^2 + 9$ and $d \leftarrow d' + 6 \omega$.
Moreover, set $h \leftarrow 1$.
\item
While $h \leq h_0$, repeat the following:
\begin{enumerate}
\item
When $d = 0$, output \texttt{true}.
\item
Set $a' \leftarrow a$ and $d' \leftarrow d$.
\item
If $a'$ is not a cube in $\FF_{p^2}$, then output \texttt{false}.
Otherwise, if $h < h_0$, then compute a cubic root $c \in \FF_{p^2}$ of $a'$, and set $a \leftarrow d'{}^2 c + 3 d' c^2 + 9 a'$ and $d \leftarrow d' + 6 c$.
\item
Set $h \leftarrow h + 1$.
\end{enumerate}
\item
Output \texttt{true}.
\end{enumerate}
We note that when $p \equiv 2 \pmod{3}$ and an input is given as a form of $j$-invariant $j_0 \in \FF_p$ (or a short Weierstrass form over $\FF_p$), we can convert the input to a twisted Hessian curve $H(a,d)$ as in Proposition \ref{prop:from_Weierstrass_to_Hesse} at the beginning (if this conversion failed, then we can conclude that the input is ordinary), and then apply Algorithm 2 to $H(a,d)$.

For Algorithm 2, we performed computer experiments to confirm the correctness of the algorithm.
Specifically, for each bit length of $p$ as in Table \ref{tab:experiment_Fp_correctness}, we chose $10000$ random primes $p$ with $p \equiv 2 \pmod{3}$ and for each $p$, we chose random parameters $a,d \in \FF_p$ with $a (27a - d^3) \neq 0$.
Then we execute our proposed algorithm (Algorithm 2) for the twisted Hessian curve $H(a,d)$.
On the other hand, we compute the $j$-invariant $j_0$ of $H(a,d)$, compute a short Weierstrass form of an elliptic curve $E$ with $j(E) = j_0$, and check whether $E$ is supersingular by using the command \texttt{E.is\_supersingular()} of computer algebra system SageMath.
The result is summarized in Table \ref{tab:experiment_Fp_correctness}, which shows that our proposed algorithm always correctly decided whether the given curve is supersingular.

\begin{table}[t]
\centering
\caption{Experimental results for the correctness of Algorithm 2 (with $p \equiv 2 \pmod{3}$ and $a,d \in \FF_p$).}
\label{tab:experiment_Fp_correctness}
\begin{tabular}{c|c|c|c|c}
Length of $p$ & \multicolumn{2}{c|}{Ordinary} & \multicolumn{2}{c}{Supersingular} \\ \cline{2-5}
(bit) & \# of curves & \# of failure & \# of curves & \# of failure \\ \hline
$10$ & $9509$ & $0$ & $491$ & $0$ \\ \hline
$15$ & $9892$ & $0$ & $108$ & $0$ \\ \hline
$20$ & $9984$ & $0$ & $16$ & $0$ \\ \hline
$25$ & $9997$ & $0$ & $3$ & $0$ \\ \hline
$30$ & $10000$ & $0$ & $0$ & $0$
\end{tabular}
\end{table}

Moreover, Table \ref{tab:experiment_Fp_SS} shows a comparison of efficiency of our proposed algorithm (Algorithm 2) with the aforementioned existing algorithm using Legendre curves in \cite{DBLP:journals/ieiceta/HashimotoN23} (combined with an improved bound for the height given in \cite{hashimoto2024boundsheights2isogenygraphs}) by computer experiments.
In the experiment, we implemented both algorithms on SageMath, where we used Tonelli--Shanks-like algorithms for computing cubic roots in our algorithm and quartic roots in the existing algorithm.
The CPU and memory environments for the experiment were 13th Gen Intel Core i5-1335U (1.30 GHz) with 16.0 GB RAM.
In the experiment, we only used supersingular input curves in order to measure the worst-case computing times for the algorithms.
Specifically, for each bit length of $p$ as in Table \ref{tab:experiment_Fp_SS}, we chose $10$ random primes $p$ with $p \equiv 2 \pmod{3}$ (and generated a cubic non-residue in $\FF_{p^2}$ and a quartic non-residue in $\FF_{p^2}$ for each $p$).
For each $p$, we chose $N$ supersingular twisted Hessian curves $H(a,d)$ with $a,d \in \FF_p$ where $N = 100$ for bit lengths $\leq 128$ of $p$ and $N = 10$ for bit lengths $\geq 256$ of $p$.
Here each $H(a,d)$ was generated by applying $100$ random $3$-isogenies defined over $\FF_p$, starting from a supersingular Hessian curve $H(1,0) \colon X^3 + Y^3 + Z^3 = 0$ with $j$-invariant $0$.
Moreover, for each $H(a,d)$, we computed a short Weierstrass form of an elliptic curve $E$ with the same $j$-invariant as $H(a,d)$, and converted it into Legendre form.
Then we executed our proposed algorithm for $H(a,d)$ and the existing algorithm for the Legendre form of $E$, and measured their computing times (in particular, we excluded the computing times for generating cubic/quartic non-residues from the measured computing times).
Then Table \ref{tab:experiment_Fp_SS} shows average computing times (in seconds).
As shown in the table, for these parameters, our proposed algorithm significantly improves the efficiency in comparison to the existing algorithm in \cite{DBLP:journals/ieiceta/HashimotoN23}.
We also note that our proposed algorithm always output the correct answer \texttt{true} in all the trials.

\begin{table}[t]
\centering
\caption{Comparison of average computing times for Algorithm 2 (with $p \equiv 2 \pmod{3}$ and $a,d \in \FF_p$) with the existing algorithm in \cite{DBLP:journals/ieiceta/HashimotoN23}.}
\label{tab:experiment_Fp_SS}
\begin{tabular}{c|c|c|c}
Length of $p$ (bit) & Our Algorithm 2 (sec.) & \cite{DBLP:journals/ieiceta/HashimotoN23} (sec.) & Ratio (Ours / \cite{DBLP:journals/ieiceta/HashimotoN23}) \\ \hline
$16$ & $3.42 \times 10^{-4}$ & $8.80 \times 10^{-4}$ & $38.9\%$ \\ \hline
$32$ & $5.57 \times 10^{-4}$ & $1.52 \times 10^{-3}$ & $36.6\%$ \\ \hline
$64$ & $1.39 \times 10^{-3}$ & $3.78 \times 10^{-3}$ & $36.8\%$ \\ \hline
$128$ & $1.72 \times 10^{-2}$ & $4.27 \times 10^{-2}$ & $40.3\%$ \\ \hline
$256$ & $8.11 \times 10^{-2}$ & $1.90 \times 10^{-1}$ & $42.7\%$ \\ \hline
$384$ & $2.32 \times 10^{-1}$ & $5.19 \times 10^{-1}$ & $44.7\%$ \\ \hline
$512$ & $5.47 \times 10^{-1}$ & $1.23 \times 10^{0\phantom{-}}$ & $44.5\%$ \\ \hline
$640$ & $9.21 \times 10^{-1}$ & $2.16 \times 10^{0\phantom{-}}$ & $42.6\%$ \\ \hline
$768$ & $1.65 \times 10^{0\phantom{-}}$ & $3.76 \times 10^{0\phantom{-}}$ & $43.9\%$ \\ \hline
$896$ & $2.66 \times 10^{0\phantom{-}}$ & $5.98 \times 10^{0\phantom{-}}$ & $44.5\%$ \\ \hline
$1024$ & $4.24 \times 10^{0\phantom{-}}$ & $9.14 \times 10^{0\phantom{-}}$ & $46.4\%$
\end{tabular}
\end{table}

\paragraph{Acknowledgements.}

This work was supported by JSPS KAKENHI Grant Numbers JP24K17281 and JP25K14994, Japan.

\bibliographystyle{plain}
\bibliography{HesseSStest}

\end{document}